\newcommand{\N}{\mathbbm{N}}
\newcommand{\conv}[1]{\operatorname{conv}#1}
\newcommand{\ie}{i.e.\ }
\newcommand{\qm}[1]{``#1''}
\newcommand{\tb}[1]{\textcolor{black}{#1}}
\title{Optimized Noise Suppression for Quantum Circuits}
\author[1,3,*]{Friedrich Wagner}
\author[2]{Daniel J. Egger}
\author[1]{Frauke Liers}
\affil[1]{Department of Data Science, University of Erlangen-Nürnberg}
\affil[2]{IBM Quantum, IBM Research Europe – Zurich}
\affil[3]{Fraunhofer Institute for Integrated Circuits, Nürnberg}
\affil[*]{\texttt{\small friedrich.wagner@iis.fraunhofer.de}}
\newtheorem{lemma}{Lemma}
\begin{document}

	\maketitle
	\begin{abstract}
Quantum computation promises to advance a wide range of computational tasks.
However, current quantum hardware suffers from noise and is too small for error correction.
Thus, accurately utilizing noisy quantum computers strongly relies on noise characterization, mitigation, and suppression.
Crucially, these methods must also be efficient in terms of their classical and quantum overhead.
Here, we efficiently characterize and mitigate crosstalk noise, which is a severe error source in, e.g., cross-resonance based superconducting quantum processors.
For crosstalk characterization, we develop a simplified measurement experiment.
Furthermore, we analyze the problem of optimal experiment scheduling and solve it for common hardware architectures.
After characterization, we mitigate noise in quantum circuits by a noise-aware qubit routing algorithm.
Our integer programming algorithm extends previous work on optimized qubit routing by swap insertion.
We incorporate the measured crosstalk errors in addition to other, more easily accessible noise data in the objective function.
Furthermore, we strengthen the underlying integer linear model by proving a convex hull result about an associated class of polytopes, which has applications beyond this work.
We evaluate the proposed method by characterizing crosstalk noise for \tb{two chips with up to 127 qubits} and leverage the resulting data to improve the approximation ratio of the Quantum Approximate Optimization Algorithm by up to 10 \% compared to other established noise-aware routing methods.
Our work clearly demonstrates the gains of including noise data when mapping abstract quantum circuits to hardware native ones.
\end{abstract}

\section{Introduction}

Quantum computers may impact many disciplines such as natural sciences~\cite{Bauer_2020}, machine learning~\cite{Biamonte_2017,cerezo2022}, and optimization~\cite{Egger2021,Moll_2018, Abbas2023}. 
However, current quantum computing devices are noisy and their qubit count is too low for quantum error correction which requires a large overhead in resources~\cite{nielsen_chuang_2010}.
By contrast, quantum error mitigation (QEM) executes an ensemble of noisy circuits and performs a classical post-processing to deliver a noise mitigated result of, typically, an expectation value~\cite{Sack_2023,van_den_Berg_2022,Temme_2017}.
Similarly, randomized compiling simplifies the noise structure such that classical post-processing can be applied to reduce noise in derived measurement quantities \cite{Kern_2005,Cai_2019,Wallman_2016,Hashim_2021, Kim2023}.
QEM thus requires an overhead in classical resources and quantum samples.
Quantum error suppression modifies hardware instructions and is less resource demanding.
For example, dynamical decoupling (DD) is a well-established error suppression method that inserts carefully chosen sequences of single-qubit gates, which evaluate to the identity~\cite{Tripathi_2022,Viola_1998}.
Noise-aware qubit-routing suppresses errors in the compilation process~\cite{Murali_2019,Nishio_2020,Niu_2020,Sarovar2020,Hua_2022}.
Error mitigation and suppression therefore allow noisy quantum computers to deliver meaningful results at scale~\cite{Kim2023}. 
Crucially, noise-aware compilation relies on a precise and efficient preceding noise characterization.

\paragraph{Existing noise characterization \tb{methods}.}
While quantum applications run on all or a large fraction of the qubits in quantum processors, gates and qubits are often benchmarked in isolation.
For example, in superconducting quantum computers, basic error data is measured by standardized daily calibration routines~\cite{IBMQuantum}.
This includes average single- and two-qubit error rates, readout error rates and qubit coherence times.
Ramsey experiments characterize qubit frequency and coherence times~\cite{Krantz_2019}.
Randomized benchmarking (RB) is an established protocol to characterize average \tb{single- and two-qubit} gate error rates~\cite{Magesan_2011,Magesan_2012,Magesan_2012_2,McKay_2019}.
\tb{
However, it is surprisingly hard to scale RB to a large number of qubits~\cite{Proctor_2019}.
By contrast, direct RB can characterize the average error rates of a processor's native gates on more than a few qubits~\cite{polloreno2023theory,Proctor_2019}.}
\tb{Furthermore, cycle benchmarking extends RB to efficiently quantify average error rates of multi-qubit operations~\cite{Erhard_2019,carignandugas2023error}.}
\tb{However, }these metrics \tb{may not provide enough details on the}
\emph{crosstalk} \tb{in an application quantum circuit }which is a severe error source and requires more elaborate experiments to characterize~\cite{Dai_2021,Abrams_2019,Murali_2020,Guan_2023,Rudinger_2021}.
In the literature, the term crosstalk is used ambiguously for a variety of noise phenomena originating from unwanted interactions in quantum information processors~\cite{Sarovar2020}.
For example, superconducting qubit devices based on fixed-frequency transmon qubits suffer from a static interaction between qubits of the form $e^{-i\theta Z\otimes Z}$, where $\theta$ denotes a rotation angle and $Z$ the Pauli Z-matrix. This is often referred to as ZZ crosstalk~\cite{Ash_Saki_2020,Tripathi_2022,Xie_2022,Wei_2022}.
By contrast, dynamic crosstalk is triggered by gate execution.
Here, frequency collisions of computational or non-computational state transitions in qubits spatially close to the driven qubits lead to an unwanted dynamics~\cite{Ketterer_2023, Heya2023, Ding_2020}.
As a result, the error rates for two-qubit gates executed in parallel increase compared to when they are executed independently.
This is sometimes referred to as CX-CX crosstalk with CX referring to the controlled-NOT gate~\cite{Murali_2020,Ash_Saki_2020,Hua_2022,Guan_2023}.
Similarly, single-qubit error rates may also increase when neighboring two-qubit gates are applied simultaneously~\cite{Ketterer_2023,Perrin_2023}.
To distinguish this effect from CX-CX crosstalk, we use the term CX-SQ crosstalk with SQ referring to single-qubit.
Increased error rates caused by CX-CX crosstalk are quantifiable via simultaneous randomized benchmarking (SRB)~\cite{Gambetta_2012, Murali_2020,Hua_2022,Guan_2023,Rudinger_2019, Rudinger_2021}.
SRB performs RB in parallel on the two-qubit gate pair of interest.
Analogously, SRB can also determine increased error rates due to CX-SQ crosstalk~\cite{Ketterer_2023}.
However, a full characterization of CX-SQ or CX-CX crosstalk for a given device
requires a careful planning of SRB experiments to keep the number of required circuits tractable~\cite{Murali_2020}.

\paragraph{Existing noise-aware \tb{transpilation methods}.}
Once noise and crosstalk are characterized, faulty hardware components can be avoided by noise-aware qubit routing, a step in the \emph{transpilation} process.
Transpilation subsumes all processes which transform a quantum circuit into a logically equivalent one, typically performing optimization steps.
Herein, qubit routing is the task of transforming
a circuit into an equivalent one which meets any hardware connectivity restrictions.
Often, this is achieved by first defining an initial mapping of circuit qubits to hardware qubits.
Next, swap gates are inserted such that qubits involved in two-qubit gates are physically adjacent at some point in the circuit~\cite{Sivarajah_2020,Li2019,zulehner2019}.
Noise data can be incorporated in both the initial mapping and the swap insertion.
Murali et al.~\cite{Murali_2019} propose a heuristic to determine a hardware-subgraph with low noise levels for the initial mapping.
The Tket compiler also offers a heuristic to choose a low-noise subgraph~\cite{Sivarajah_2020}.
Niu et al.~\cite{Niu_2020} propose a routing algorithm where only the swap insertion procedure is noise-aware.
Nishio et al.~\cite{Nishio_2020} propose a routing method which considers noise in both the initial choice of a subgraph and the consecutive swap insertion procedure.
Notably, all noise-aware routing methods mentioned so far do not consider crosstalk.
Hua et al.~\cite{Hua_2022}, on the other hand, propose a CX-CX crosstalk aware routing method which considers crosstalk only in the swap insertion phase.
\tb{Booth et al.~\cite{Booth2018} propose a routing method which considers different crosstalk types in the initial layout and swap insertion.}
\tb{However, their method is insensitive to varying crosstalk strength among different qubits.}
\tb{Khadirsharbiyani et al.~\cite{Khadirsharbiyani_2023} develop an initial layout method to reduce CX-CX crosstalk.}
\tb{Xie et al.~\cite{Xie_2021} re-order gates based on commutativity rules to reduce CX-CX crosstalk.}
\tb{Importantly, all crosstalk-aware methods mentioned so far do not consider other error data like two-qubit errors, coherence times or readout errors.}
Gate scheduling, i.e., defining the exact execution times of gates without changing their order, can also help mitigate noise~\cite{smith2021error}.
In particular, gate-triggered crosstalk can sometimes be avoided by delaying gates~\cite{Murali_2020, Guan_2023, niu2021analyzing} which, however, comes at the cost of an increased execution time which in turn leads to larger decoherence noise.
\tb{On the other hand, Xie et al.~\cite{Xie_2022} use gate scheduling to reduce static ZZ-crosstalk.}
Tripathi et al.~\cite{Tripathi_2022} \tb{and Zhou et al.~\cite{Zhou_2023}} reduce static ZZ-crosstalk with DD, but do not consider dynamic crosstalk suppression.
\tb{Fang et al.~\cite{Fang_2022} partially undo gate-triggered crosstalk by inserting single-qubit gates on a trapped-ion processor.}
Because of its high impact, crosstalk is even considered in quantum hardware design and modeling~\cite{Sarovar2020, Zhao_2022, huang2020alibaba, Winick_2021}.

\paragraph{Our contribution.}
This work contributes to efficient crosstalk characterization and measurement.
Additionally, it exploits the obtained findings for high-quality noise suppression.
First, we simplify SRB experiments to quantify CX-SQ crosstalk.
Instead of running random single- and two-qubit gate sequences in parallel, we replace the random two-qubit gate sequence by a single, appropriately stretched cross-resonance pulse~\cite{Rigetti2010}.
This avoids the time consuming compilation of long sequences of random two-qubit gates.
Moreover, we propose an optimal experiment scheduling protocol for measuring crosstalk of a full device.
To this end, we formulate the task of determining the number of necessary experiments as a graph coloring problem.
Although graph coloring is an NP-hard problem in general, we show that for a common hardware architecture family, it can be solved analytically.
For other common families, experiments show that integer programming solves the coloring problem to optimality within reasonable time and only requires a constant number of colors, independently of the hardware size.
As a result, only a constant number of circuits is required 
for full CX-SQ crosstalk characterization on common architectures of arbitrary size.
Concerning noise suppression, we enhance and extend a routing method based on integer programming that considers both standard calibration data and crosstalk data.
\tb{This is in contrast to existing noise-aware routing approaches which focus on either crosstalk or standard calibration data.}
\tb{Contrary to QEM techniques, our method reduces noise in individual samples rather than expectation values.}
We build upon the integer programming approach of Ref.~\cite{WBLW_2023}.
We first strengthen the underlying linear model by providing an outer description of the convex hull for a closely related class of polytopes, which generalizes to other applications in operations research.
\tb{As a result, the integer programming runtime is reduced such that practically relevant instances can be solved in reasonable time although the underlying problem is NP-hard.}
Moreover, we extend the objective function with additional binary quadratic terms to incorporate noise data.
We evaluate \tb{the} proposed methods on the 27 qubit \emph{ibmq\_ehningen} device \tb{and on the 127 qubit \emph{ibm\_kyoto} device}.
First, the applicability of the characterization procedure is shown by characterizing crosstalk for the complete devices.
Additionally, we evaluate the combination of noise characterization and suppression by improving the performance of the Quantum Approximate Optimization Algorithm (QAOA,~\cite{farhi_2014quantum}) \tb{on \emph{ibmq\_ehningen}}.
The experiments show that the proposed approach suppresses noise more effectively than exiting methods for noise-aware routing.

\paragraph{Structure.}
The reminder of this paper is structured as follows.
We develop the new method to characterize crosstalk and optimally schedule experiments in Section~\ref{sec:xt_char}.
Building upon this, we enhance and extend the existing routing algorithm to incorporate crosstalk errors and other noise data in Section~\ref{sec:mitigation}.
In Section~\ref{sec:exp}, we perform computational experiments that use the noise values obtained earlier.
We evaluate the developed characterization and suppression tools on real quantum hardware and show that the obtained results are improved compared to the existing noise-aware routing methods.
We end with a conclusion in Section~\ref{sec:concl}.

\section{Optimized Crosstalk Characterization}\label{sec:xt_char}
A precise and efficient quantification of crosstalk is crucial to mitigate it.
Moreover, regular characterization routines are necessary since noise characteristics may fluctuate over time~\cite{Murali_2020}.
Thus, to minimize the effort of characterizing crosstalk one should use a small number of simple experiments.
To this end, we propose a simplified version of SRB in Section~\ref{sec:srb} which avoids long sequences of random two-qubit gates.
We perform RB on single-qubit gates while a single long-duration two-qubit gate is applied on neighboring qubits.
Additionally, in Section~\ref{sec:coloring} we derive an optimized experiment schedule for crosstalk characterization of a complete device.
We model the scheduling problem as a graph coloring problem on an interference graph.
For graphs arising from heavy-hexagonal architectures, a common device family, we construct its optimum solution analytically.
Here, a constant number of colors suffices, even for infinite graphs.
Thus, the resulting scheduling protocol is optimal in terms of executed circuits,
which is constant, independent of the hardware size.
For other common hardware architectures consisting two-dimensional grids and six-regular graphs, we solve the coloring problem by integer programming.
\tb{Finally, in Section~\ref{sec:xt_char_res}, we demonstrate the applicability of the developed characterization and scheduling methods
by fully characterizing crosstalk noise for the chips of \emph{ibmq\_ehningen} and \emph{ibm\_kyoto}.}

\subsection{Crosstalk Characterization by Simultaneous Randomized Benchmarking\label{sec:srb}}
In superconducting qubit devices gates are executed by applying microwave pulses.
Here, two-qubit gates may trigger crosstalk due to frequency collisions, i.e., similar transition frequencies of neighboring qubits~\cite{Ketterer_2023,Murali_2020,Guan_2023}.
These chips are thus carefully designed.
For instance, the frequency allocation problem can be formulated as a mixed-integer programming problem~\cite{Morvan_2022}.
Here, we work with hardware based on the cross-resonance (CR) interaction~\cite{Rigetti2010}, from which a CX gate is built.
When applying a CR gate, one of the two involved qubits is driven at the frequency of the other.
As a result, frequency collisions between transitions of next-nearest neighbors are relevant and may lead to an unwanted driving of spectator qubits.

Standard RB determines the average error per $n$-qubit gate, where $n\leq 2$ throughout this work.
It can be extended to measure crosstalk errors between two disjoint sets of qubits.
To this end, one first performs RB on both sets sequentially.
Afterwards both experiments are repeated, but performed simultaneously.
The observed increase in the average error per gate is a direct measure of the crosstalk strength~\cite{Murali_2020, Guan_2023,Rudinger_2021, Gambetta_2012}.
For example, CX-SQ crosstalk between the CX gate on qubits $(i,j)$ and qubit $k$ is characterized by first performing single-qubit RB on qubit $k$ to measure the average error per single-qubit gate on qubit $k$.
Next, two-qubit RB on qubits $(i,j)$ yields the average error per gate on qubits $(i,j)$.
Finally, both experiments are performed simultaneously and any increase in average error per gate is attributed to crosstalk~\cite{Ketterer_2023}.
We refer to this method of characterizing crosstalk as SRB.
SRB requires compiling and executing many long sequences of two-qubit gates, which \tb{can be} time consuming~\cite{KHANEJA2001}.
\tb{Here, direct RB can reduce the compilation overhead~\cite{Proctor_2019,polloreno2023theory}.
However, our method entirely avoids the compilation of two-qubit gates and generalizes to multi-qubit gates.}

We simplify the measurement of crosstalk via SRB in two ways.
First, it is sufficient to measure the influence of each CX gate on its neighboring qubits.
Indeed, the reverse characterization, i.e., the influence of single-qubit gates on neighboring CX gates (termed SQ-CX crosstalk), is not necessary since our experiments, presented in Appendix~\ref{sec:app_xt}, show that this effect is an order of magnitude smaller than CX-SQ crosstalk.
Analogous experiments in Appendix~\ref{sec:app_xt} show that the same holds true for crosstalk among single-qubit gates (SQ-SQ crosstalk).
Moreover, similar experiments, also shown in Appendix~\ref{sec:app_xt}, reveal that CX-CX crosstalk can be attributed to CX-SQ crosstalk in large parts and is thus captured implicitly by quantifying CX-SQ crosstalk.
In summary, we do not need to determine two-qubit error rates at all, which drastically reduces the number of experiments.
Thus, in the example above, we skip the second step which determines the error rate for gate $(i,j)$.
Moreover, since the two-qubit gate error rate is not needed we replace the two-qubit gate sequence by a single, appropriately stretched cross-resonance pulse~\cite{Earnest2021}, schematically shown in Fig.~\ref{fig:schematic}.
Summarizing, for CX-SQ crosstalk between CX $(i,j)$ and qubit $k$, we perform only two experiments:
standard RB on qubit $k$ and RB on qubit $k$ with a simultaneous stretched CR pulse applied to qubits $i$ and $j$, see Fig.~\ref{fig:srb_ex} for an example.
We refer to this simplified RB protocol as CXRB.

\begin{figure}
    \captionsetup[subfigure]{position=top, labelfont=bf,textfont=normalfont,singlelinecheck=off,justification=raggedright, skip=0pt,margin = {00pt, 20pt}}
    \makebox[\textwidth][c]{
    \subfloat[]{\includegraphics[width=0.33\textwidth]{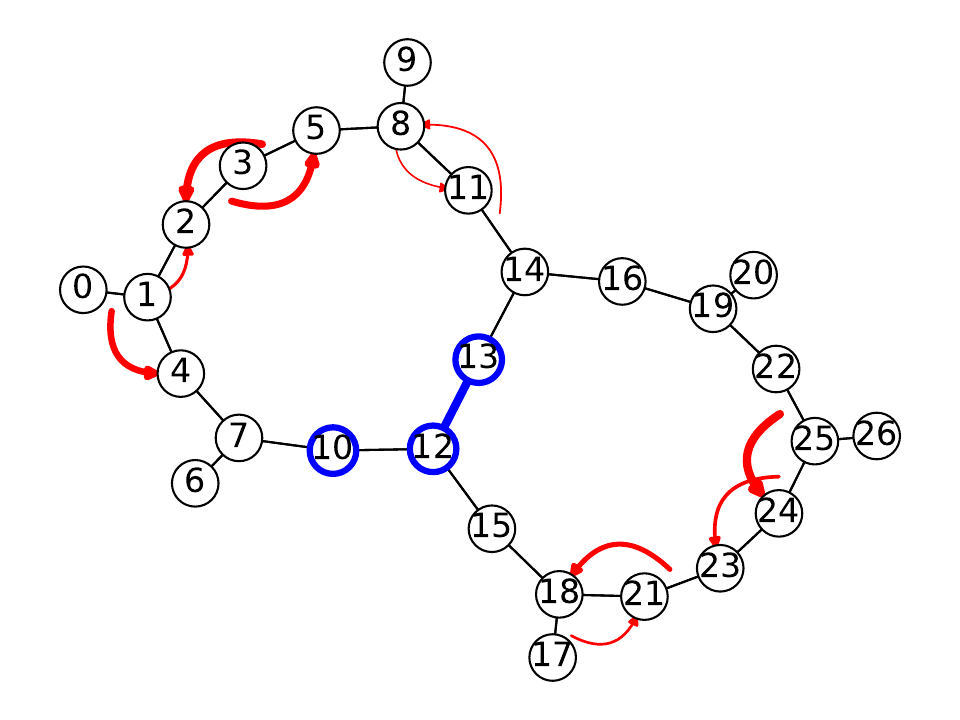}\label{fig:cm_ehningen}}
 	\subfloat[]{\includegraphics[width=0.33\textwidth]{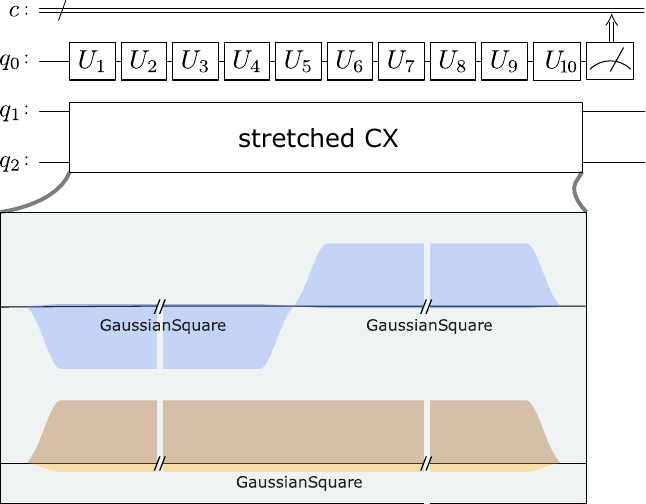}\label{fig:schematic}}
    \subfloat[]{\includegraphics[width=0.33\textwidth]{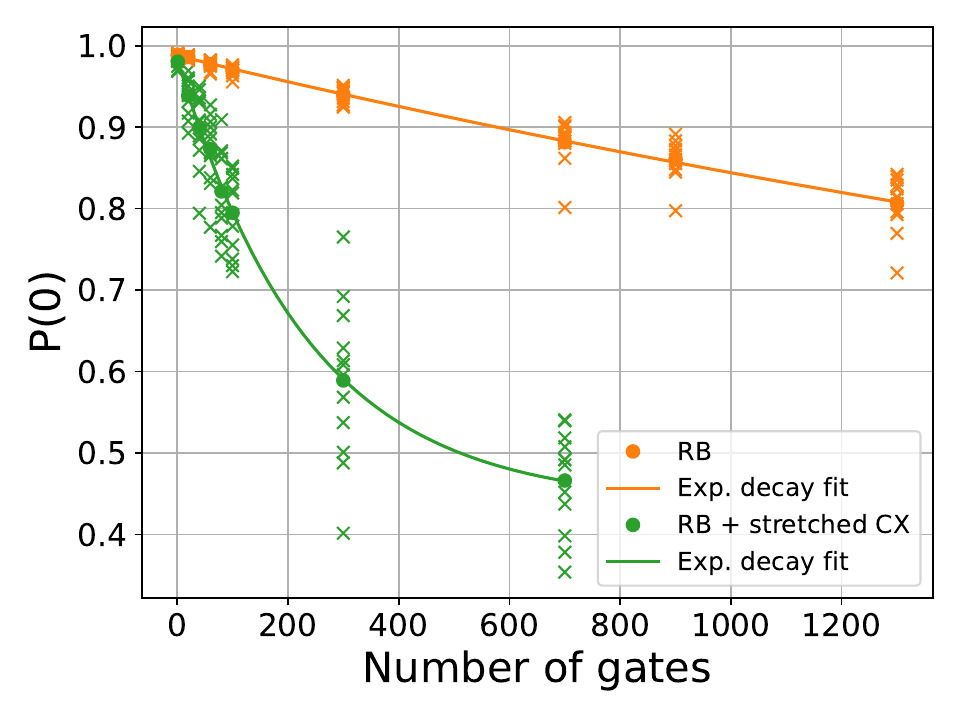}\label{fig:srb_ex}}} \\
    \subfloat[]{\includegraphics[width=1\textwidth]{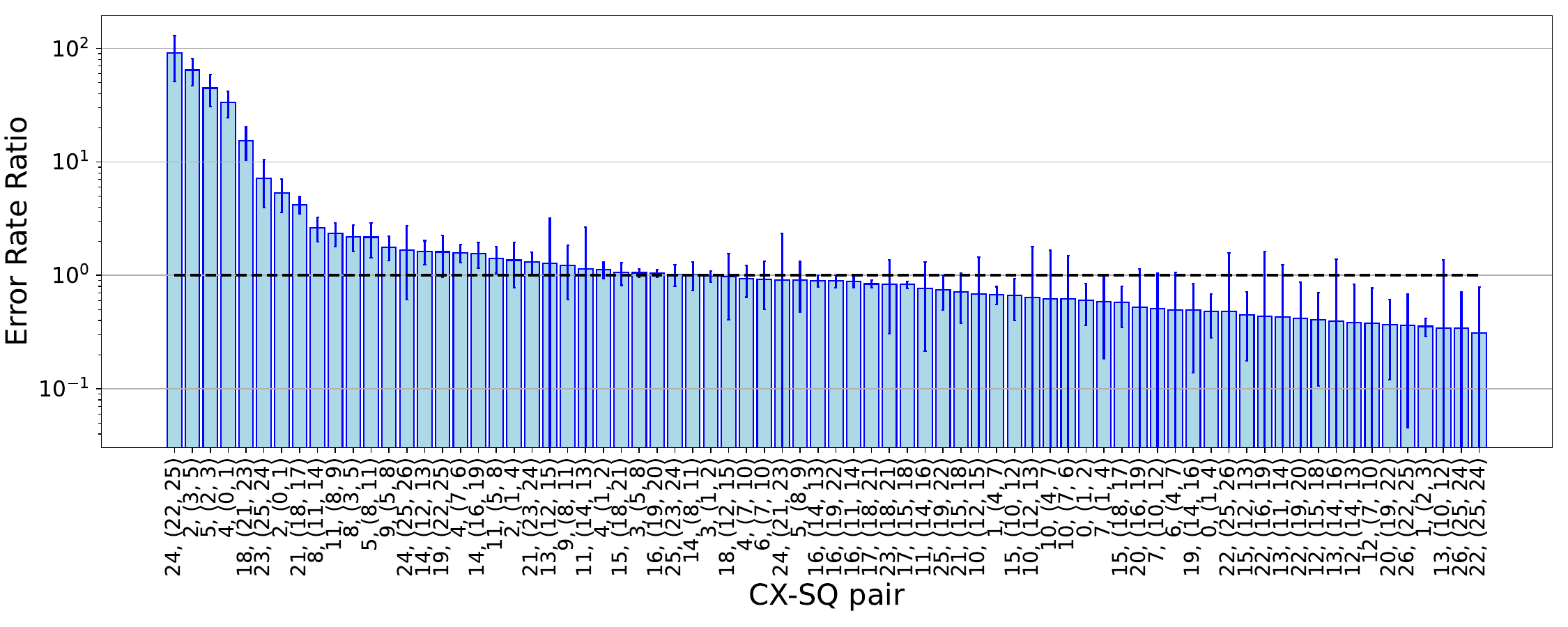}\label{fig:hist_cxrb}}
    
	\caption{ (a)~Coupling graph of \emph{ibmq\_ehningen}, a subgraph of a heavy hexagonal lattice. Red arrows indicate large CX-SQ crosstalk, where the thickness is proportional to the crosstalk magnitude. In bold blue, a CX-SQ pair (connected vertex triplet) is marked. In total, there exist 74 such pairs, which need to be characterized for crosstalk.
 (b)~Quantum circuit of the CXRB experiment.
 A standard RB sequence of random gates is applied on $q_0$.
 In parallel, a stretched CR pulse is applied to qubits $q_1$ and $q_2$ to mimic the effect of ${\rm CX}_{1, 2}$.
 (c)~Exemplary results of a CXRB experiment. Crosses mark single measurements, dots are averages and solid lines are exponential decay fits.
 The decay rate of a fit curve directly relates to the average error rate per gate, \tb{see Appendix \ref{sec:app_rb} for details}.
 We apply less random gates for RB with parallel stretched CX (green, lower curve) since we expect a steeper decay compared to standard RB (orange, upper curve).
 (d)~CX-SQ magnitude for the complete chip of \emph{ibmq\_ehningen}. For each CX-SQ pair, e.g. the blue nodes in (a), we give the ratio between the error rate with and without applied CR pulse (ERR).
 Error bars \tb{are obtained by performing an error propagation from the errors in the exponential decay fits}. 
 The dashed line corresponds to $\text{ERR}=1$, i.e., no crosstalk.
 }
	\label{fig:srb_schematic}    
\end{figure}

\subsection{Optimal Experiment Scheduling via Graph Coloring}\label{sec:coloring}
Having developed a simplified CX-SQ crosstalk measurement technique, we now derive a procedure to characterize a complete chip with a minimal number of circuits.
Characterizing a complete chip amounts to measuring the influence of every native CX gate on all of its neighboring qubits via CXRB experiments, see Fig.~\ref{fig:cm_ehningen}.
To execute as few circuits as possible we parallelize experiments.
First, we note that the influence of a given CX on all neighboring qubits can be
characterized in parallel.
To this end, we perform RB circuits on all neighboring single qubits in parallel.
Next, we repeat RB but with a stretched CR pulse on the appropriate qubits.
Moreover, two CX gates can be characterized in parallel if they do not interfere, that is, if they do not share a common neighbor.
A common approach for such planning tasks is to model the problem as a graph coloring problem on an interference graph~\cite{Marx_2004}, which is an NP-hard problem in general.
In a graph coloring, each vertex is assigned a color such that its edges only connect vertices having different colors.
The minimal number of colors required to color a given graph is called its chromatic number.
In our case, the interference graph has a vertex for every edge in the hardware graph, representing the CX gates to characterize.
Two vertices are connected if the corresponding CX gates interfere as defined above. 
An optimal vertex coloring of the interference graph, i.e. a coloring with smallest number of colors, now corresponds to an experiment schedule with a minimal number of circuits.
The authors of Ref.~\cite{Murali_2020} employ a randomized greedy coloring algorithm to tackle the coloring problem heuristically.
The greedy algorithm initializes all vertices uncolored.
Then, the vertices are traversed in a random order and each vertex is assigned the smallest feasible color.
The procedure is repeated multiple times and the best coloring is returned.

Instead of a coloring heuristic, we use an exact coloring algorithm which bears several advantages.
First, a solution with less colors directly reduces the overhead needed to characterize crosstalk.
Second, the optimal coloring is only computed once per device.
Moreover, most current quantum devices can be grouped into architecture \emph{families}.
All device architectures in a family are subgraphs of the same infinite graph.
Examples include two dimensional grids~\cite{Arute_2019} and heavy-hexagonal lattices~\cite{Chamberland_2020}.
The authors of Ref.~\cite{bravyi2023} propose novel architectures based on six-regular graphs.
It is not hard to see that the interference graphs of all architectures in a family are subgraphs of the interference graph of the infinite graph.
Furthermore, a coloring of the infinite graph naturally induces a coloring for every subgraph.
The induced coloring will not be optimal in general.
However, we show that this is indeed the case for heavy-hexagonal lattices if the subgraph exceeds a certain minimum size.
For grids, we resort to a suitably large finite graph, containing all existing architectures as subgraphs, and also show that the induced colorings are optimal
if the subgraphs exceeds a certain size.
Finally, since the coupling of current quantum devices is sparse, also the corresponding interference graphs are sparse,
such that integer programming methods can solve the coloring problem to optimality for all practically relevant instances in reasonable time.

As a relevant example, the architecture of the device used in this work is a subgraph of a heavy-hexagonal lattice.
A heavy-hexagonal lattice is a hexagonal lattice where an additional vertex is inserted in every edge as shown in Fig.~\ref{fig:heavy_hex_coloring}.
We refer to a single hexagon, consisting of 12 vertices, as a unit cell.
With this notion, the following Lemma applies.
\begin{lemma}\label{lem:hex_color}
    Let $G$ be a finite subgraph of the infinite heavy-hexagonal graph
    and let $L(G)$ be its interference graph.
    If $G$ contains two connected unit cells as a subgraph, then $L(G)$ has chromatic number $\chi(L(G))=6$.
\end{lemma}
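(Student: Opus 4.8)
The plan is to prove the two inequalities $\chi(L(G))\ge 6$ and $\chi(L(G))\le 6$ separately, where the upper bound holds for \emph{every} finite subgraph of the heavy-hexagonal lattice and only the lower bound uses the hypothesis on two connected unit cells. Throughout I identify a vertex of $L(G)$ with the corresponding edge $e=\{x,y\}$ of $G$, and I read the interference relation as follows: two CX gates cannot be benchmarked in parallel exactly when the qubits they involve overlap, i.e.\ when the closed edge-neighborhoods $N[x]\cup N[y]$ and $N[x']\cup N[y']$ share a vertex (a driven qubit of one experiment coincides with a driven or benchmarked qubit of the other).

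For the lower bound I would exhibit a clique of size six. Fix a degree-three vertex (a lattice ``corner'') $v$ with its three midpoint-neighbors $a,b,c$, and let $a',b',c'$ be the opposite corners reached through $a,b,c$. The six edges $va,vb,vc,aa',bb',cc'$ pairwise interfere, since $v$ lies in the closed neighborhood of each of them; hence they form a $K_6$ in $L(G)$ and $\chi(L(G))\ge\omega(L(G))\ge 6$. It then remains to check that two connected unit cells always contain such a configuration: the corner shared by the two hexagons has all three incident edges present, together with their continuations to the three surrounding corners, which is precisely the data required above. A single unit cell does not suffice, because each of its corners is missing its third ``spoke,'' so no $K_6$ of this form closes up. This is the combinatorial reason the hypothesis is stated for two cells.

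For the upper bound I would write down an explicit periodic $6$-coloring and then restrict it to $G$. Using that the underlying honeycomb lattice is bipartite with sublattices $A,B$ and that its edges fall into three orientation classes $d_1,d_2,d_3$, I color each heavy-hexagonal edge by the pair (sublattice of its unique corner endpoint, orientation of the honeycomb edge it belongs to), yielding six color classes. To see this is proper, note that any two interfering edges share a vertex $w$ in their closed neighborhoods, and $w$ is either a corner or a midpoint. In both cases the at most six edges having $w$ in their closed neighborhood are exactly the edges incident to $w$ or to a neighbor of $w$, and a short check shows these realize all six pairs (sublattice, orientation) exactly once. Thus interfering edges always receive distinct colors, so $\chi(L(G))\le 6$, and combining with the lower bound gives $\chi(L(G))=6$.

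I expect the main obstacle to be the properness verification of the coloring: one must argue that every interference is ``anchored'' at a single vertex $w$ and that the local star around $w$ (whether $w$ is a corner or a subdivision midpoint) is rainbow-colored. Once the two anchored-clique types are enumerated this is routine, but getting the bookkeeping of orientations and sublattices right --- and confirming that boundary corners of a finite $G$ only ever see a \emph{subset} of a rainbow star, so properness survives restriction --- is where the care is needed. The lower bound is conceptually the crux (spotting the correct six edges and matching them to the two-unit-cell hypothesis) but is short once the $K_6$ is identified.
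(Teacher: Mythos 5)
Your proof is correct and shares the paper's two-part skeleton (a six-clique for the lower bound, an explicit periodic six-coloring of the infinite lattice for the upper bound), but both halves are instantiated differently and, in each case, more completely. For the clique the paper only points to a figure; you identify the configuration explicitly as the star of three length-two paths around a degree-three corner $v$, note that $v$ lies in the closed neighborhood of all six edges so they pairwise interfere, and explain why two connected unit cells (but not one) contain such a star. For the upper bound the paper colors the ``first six edges'' of each hexagon in clockwise order and declares properness ``easily verified''; your coloring by the pair (honeycomb sublattice of the corner endpoint, orientation of the underlying honeycomb edge) is a genuinely different construction, and your verification --- every interference is anchored at some vertex $w$, and the six edges whose closed neighborhoods contain $w$ form a rainbow star whether $w$ is a corner or a midpoint --- is an actual proof that also restricts correctly to finite subgraphs. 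One point worth making explicit: the paper's phrase ``share a common neighbor'' is informal, and your reading (the closed edge-neighborhoods $N[x]\cup N[y]$ and $N[x']\cup N[y']$ intersect) is the one under which your six edges really do form a clique; under a literal open-neighborhood reading, $aa'$ and $bb'$ would not interfere. Since $N[x]\cup N[y]$ is exactly the set of qubits involved in the CXRB experiment for that gate, your interpretation is the physically intended one, so this is a clarification rather than a gap.
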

\begin{proof}
    First, we note that if $G$ contains two connected unit cells as a subgraph,
    $L(G)$ has a clique of size six, see Fig.~\ref{fig:heavy_hex_coloring}.
    Thus, $\chi(L(G))\geq 6$.
    Let $\tilde{G}$ be the infinite heavy-hexagonal graph.
    We construct a proper six-coloring of every finite subgraph of the interference graph $L(\tilde{G})$ in the following way.
    Let the six colors be numbered from 1 to 6 and
    choose an arbitrary order of the hexagons in $\tilde{G}$.
	In every hexagon, we enumerate its first six edges, starting in the lower-left and proceeding clock-wise and color it with the respective color.
	For every edge, there is exactly one hexagon such that the edge is among the hexagon's first six edges.
    Thus, every edge holds exactly one color.
    It is easily verified that this yields indeed a proper six-coloring of $L(\tilde{G})$, see Fig.~\ref{fig:heavy_hex_coloring}.
\end{proof}
As a consequence of Lemma~\ref{lem:hex_color}, every hardware chip with heavy-hexagonal architecture can be characterized for CX-SQ crosstalk
by performing six consecutive CXRB experiments.
Moreover, if the chip contains two hexagonal unit cells, this is the minimal number of experiments.
By contrast, the best solution we found in $10,000$ runs of the randomized greedy algorithm uses 9 colors instead of the best possible number of 6 for a heavy-hexagonal lattice with 25 unit cells arranged in a $5 \times 5$ grid which has 164 vertices.
The latter is approximately the size of the largest existing heavy-hexagonal device~\cite{IBMQuantum}.

For 2D-grids, we resort to a suitably large finite lattice of size $11\times 11$ which contains the largest currently existing 2D-grid quantum architectures as a subgraph~\cite{Acharya_2023}.
Next, we model the graph coloring instance as an integer linear program, see e.g. Ref.~\cite{Lewis2016},
which is solved via an available state-of-the-art solver for mixed-integer programming~\cite{gurobi} within roughly 80 seconds. 
The minimal number of colors is 16, almost three times as large as for heavy-hexagonal lattices.
Moreover, the induced coloring is optimal for every subgraph containing a $5\times 5$ grid, since they contain a clique of size 16, see Fig.~\ref{fig:grid_coloring}.
Here, the best solution found in $10,000$ runs of the greedy algorithm employs more than the necessary 16 colors, namely 22 colors.

The simplest coupling map example in Ref.~\cite{bravyi2023} is a degree-six regular graph with 144 vertices and 432 edges.
Here, solving the graph coloring integer linear program took roughly 24~hours 
which is still an acceptable runtime since the integer program needs to be solved only once per device or device family, as discussed above.
\tb{Moreover, tuning the solver parameters to focus more on finding good solutions rather than generating bounds reduced the runtime to six hours.}
\tb{For practical applications, we can even interrupt the solver early and take the best solution found so far.
    Often, this yields a close-to-optimal solution.}
\tb{Finally, theoretical analysis of the underlying problem can improve the IP solution time significantly as we show in Sec.~\ref{sec:polyhedral}.}
The IP solution reveals that an optimal coloring uses 36 colors.
By contrast, the best solution found in $10,000$ runs of the greedy algorithm employs 62 colors, i.e., almost twice as many.

In summary, we can find the exact minimum number of experiments to characterize crosstalk without resorting to heuristics by exploiting the regular nature of the coupling map of a quantum device.
Indeed, our experiments reveal that the randomized greedy heuristic typically finds a coloring that needs considerably more numbers than the optimum solution on all three tested architectures.
Furthermore, we observe a strong increase of the overhead required for crosstalk characterization of denser architectures which typically require more colors than sparse architectures.

\begin{figure}
	\centering
	\subfloat[]{\includegraphics[width=0.4\linewidth]{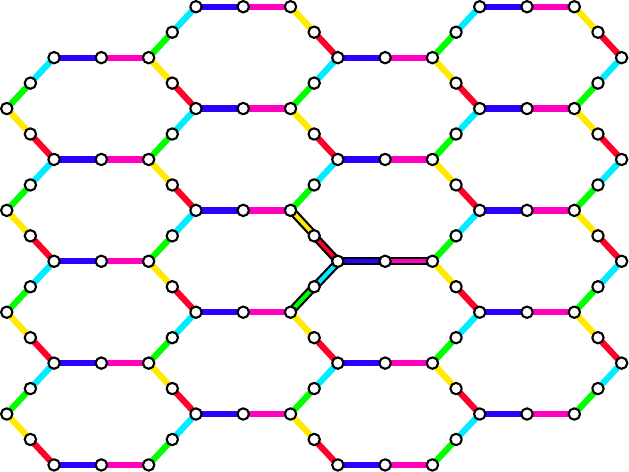}\label{fig:heavy_hex_coloring}}\qquad
    \subfloat[]{\includegraphics[width=0.4\linewidth]{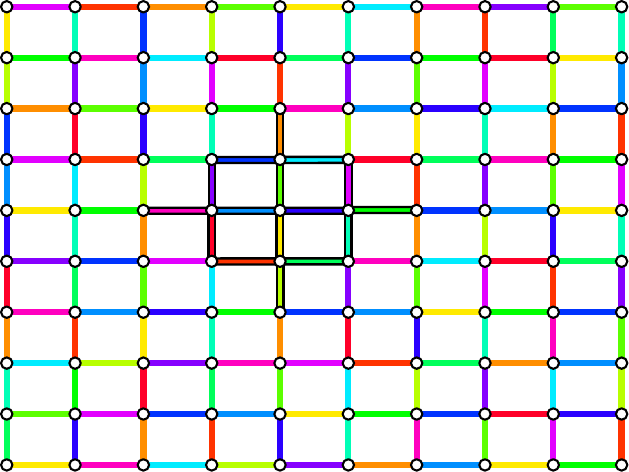}\label{fig:grid_coloring}}
	\caption{Visualization of the efficient crosstalk characterization protocol for a heavy-hexagon (a) and a grid architecture (b).
    Identically colored edges are characterized in parallel.
    The heavy-hexagonal structure requires only six experiments, whereas the grid needs 16. Edges marked with solid black lines form a clique of size six and 16 in the corresponding interference graphs.}
	\label{fig:coloring}
\end{figure}

\subsection{Hardware Characterization}\label{sec:xt_char_res}
We demonstrate the applicability of CXRB by characterizing CX-SQ crosstalk for the complete \emph{ibmq\_ehningen} chip.
The hardware graph, shown in Fig.~\ref{fig:cm_ehningen}, has 27 qubits connected by 28 resonators.
A total of 74 CX-SQ pairs, i.e., connected edge-vertex pairs in Fig.~\ref{fig:cm_ehningen}, exist whose crosstalk we characterize.
Using the optimal experiment schedule derived in Section~\ref{sec:coloring}, this is achieved by executing only six consecutive batches of CXRB.
The experiments are implemented with the open source framework Qiskit Experiments~\cite{Kanazawa2023}.
From the resulting data, we compute the average error per single-qubit gate with and without the parallel CX drive and compute their ratio.
We refer to this ratio as the Error Rate Ratio (ERR).
For most CX-SQ pairs the ERR is close to one, i.e., there is no significant crosstalk, see Fig.~\ref{fig:hist_cxrb}.
The ERR is larger than one for only 13 pairs at a statistical significance level of 95~\%.
Moreover, we observe an $\text{ERR}>10$ for five pairs which are thus severely impacted by crosstalk.
Values of $\text{ERR}<1$ are likely due to measurement uncertainties.

To show that the simplified CXRB protocol can indeed replace SRB, we additionally perform SRB experiments for CX-SQ characterization on the complete chip.
The crosstalk measured with CXRB and SRB have a correlation coefficient of $0.95$, see Fig.~\ref{fig:cxrb_srb_cor}.
\tb{To quantify the statistical significance of the inferred correlation coefficient, we perform a statistical test for
the null-hypothesis that uncorrelated, normally distributed data would yield a correlation coefficient at least as large.
The test yields a a $p$-value of $1.0\cdot 10^{-38}$ which shows that the correlation is highly significant.}
We therefore conclude that if SRB detects crosstalk then so does CXRB.

Finally, to demonstrate scalability and generalizability of the simplified CXRB protocol,
we additionally characterize CX-SQ crosstalk for the complete 127-qubit chip of {\emph{ibm\_kyoto}}.
This device has a total of 394 CX-SQ pairs whose crosstalk we characterize.
Even with more than five times as many CX-SQ pairs as \emph{ibmq\_ehningen},
we only need to execute six consecutive batches of CXRB to fully characterize CX-SQ crosstalk on the complete chip.
This is achieved via the optimal experiment schedule derived in Section~\ref{sec:coloring}.
Similar to \emph{ibmq\_ehningen}, the ERR is close to one for most CX-SQ pairs, see the data in Appendix~\ref{sec:app_kyoto}.
We observe an ERR larger than one for only {30} out of the 394 pairs at a statistical significance level of 95~\%. 
Additionally, we perform CX-SQ characterization via standard SRB for the complete chip of {\emph{ibm\_kyoto}}.
The correlation coefficient between the ERRs measured with CXRB and SRB computes to $0.32$, which is smaller than for \emph{ibmq\_ehningen}
but still highly significant with a $p$-value of $3.0\cdot 10^{-6}$, see Appendix~\ref{sec:app_kyoto}. 
We thus conclude that the simplified CXRB protocol is scalable to larger devices and reliably detects crosstalk.

\begin{figure}
	\centering
    \includegraphics[width=0.5\linewidth]{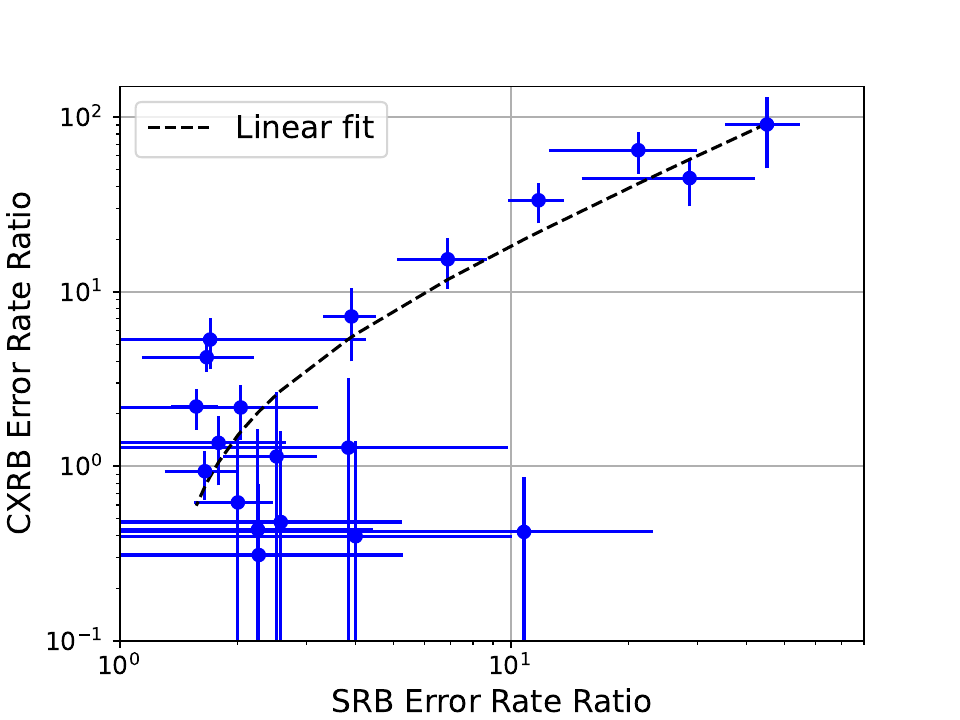}
	\caption{
    Correlation between the error rate ratio measured via SRB and CXRB. 
    For clarity, only points corresponding to the $20$ largest SRB ERRs are depicted.
    Error bars mark one standard deviation.
    The Pearson correlation coefficient computes to $0.95$ (all data points included), revealing a large linear correlation.
    }
	\label{fig:cxrb_srb_cor}
\end{figure}

\section{Crosstalk Mitigation via Integer Programming}\label{sec:mitigation}
We now develop a noise-aware qubit routing algorithm which, besides single-qubit, two-qubit and readout errors, also accounts for crosstalk errors.
In general, qubit routing methods take a quantum circuit and a hardware connectivity graph as input and return a hardware-compliant quantum circuit.
This circuit is logically equivalent to the input circuit up to basis permutations, i.e.~relabeling qubits.
First, one defines an initial mapping from circuit qubits to hardware qubits.
Finding a good initial mapping, e.g., to minimize swap overhead is an NP-hard problem~\cite{MATSUO_2023,Peham2023}.
Next, if needed, swap gates are inserted in the circuit, effectively changing the mapping from circuit qubits to hardware qubits, such that circuit qubits involved in two-qubit gates are always mapped on connected hardware qubits.
For example, heuristics, such as SABRE~\cite{Li2019}, iteratively refine the initial mapping and the swap gate insertion.
Naive objectives for routing are either swap count or circuit depth.
By contrast, noise-aware methods use more complex quality metrics that incorporate hardware noise data to estimate the performance of the routed circuit on the target hardware~\cite{Murali_2019,Nishio_2020,Niu_2020,Sarovar2020,Ferrari_2022}.
The measured crosstalk in Section~\ref{sec:srb} motivates a noise-aware routing method to reduce errors.
Intuitively, crosstalk is avoidable via routing at a low swap cost if the subset of gate-qubit pairs suffering from large crosstalk is small.
This is the case for \emph{ibmq\_ehningen} where only 5 pairs have a large crosstalk, see Fig.~\ref{fig:srb_schematic}.

\subsection{Qubit Routing via Integer Programming}\label{sec:polyhedral}
We build on the routing algorithm {TAP+TS} proposed in Ref.~\cite{WBLW_2023}.
Here, TAP+TS refers to token allocation problem and token swapping problem, two NP-hard optimization problems whose solutions are the central building blocks of the algorithm.
TAP+TS is itself based on the exact binary linear programming approach of Ref.~\cite{Nannicini_2023}.
Typically, such methods need a considerably smaller number of swap gates than state-of-the art heuristics at the expense of an increased running time.
However, TAP+TS is still faster by a factor of 100 on average than exact methods, which are intractable even for moderate input sizes with less than ten qubits~\cite{Nannicini_2023,Wille_2019}.
Furthermore, in contrast to other heuristics, it returns provable bounds on the quality of the obtained solution.
We now summarize the three phases of the TAP+TS algorithm.
\begin{enumerate}
    \item All two-qubit gates are grouped into layers.
        A layer is a set of gates on disjoint qubits that can be executed in parallel.

    \item After grouping, the token allocation problem (TAP) is solved
        via binary linear programming.
        This is the computationally most expensive step in the algorithm.
        The solution of the TAP is a mapping (allocation) from circuit qubits to hardware qubits for each layer.
        Here, the objective function to minimize is a lower bound on the total number of swaps required. 

    \item Finally, after allocating qubits in each layer, swaps between consecutive layers are inserted to transform between consecutive qubit allocations.
        For each pair of consecutive allocations, this task forms an instance of the token swapping problem, which is also NP-hard in itself.
        The token swapping problems are solved by an efficient approximation algorithm, an improved version of the algorithm originally proposed in Ref.~\cite{Miltzow_2016}.
\end{enumerate}

Here, we enhance this routing algorithm in two ways.
First, we improve the TAP binary linear model compared to Ref.~\cite{WBLW_2023} by giving a complete description of the convex hull of specific constrained binary quadric polytopes.
On the theoretical side, this result generalizes beyond our application.
Computationally, for our instances it improves the running time when solving the model with a branch-and-cut algorithm by about a factor of two on average.
Second, we incorporate noise data in the cost function of the model.
Besides single-qubit, two-qubit and readout error rates, we also incorporate crosstalk.

To ensure our work is self-contained, we summarize the TAP binary linear programming model from Ref.~\cite{WBLW_2023}. 
Afterwards, we strengthen the model and extend the objective function to incorporate noise data via additional quadratic terms.
We consider a quantum circuit on a set $Q$ of qubits, called circuit qubits, and a sequence of $N>0$ layers $L^1, \ldots, L^{N}$.
Each layer $L^t\subset Q \times Q$ consists of disjoint pairs of circuit qubits representing the two-qubit gates in the circuit.
Moreover, we consider a directed graph $H=(V,A)$ with vertices $V$, $|V|\geq |Q|$, and arc set $A$ representing the hardware qubits and hardware-native CX gates, respectively.
Here, $A$ is assumed to be symmetric, that is if $(i,j)\in A$, then also $(j,i) \in A$.
For $i,j\in V$, let $d_H(i,j)$ denote the length of a shortest path connecting $i$ and $j$ in $H$.
We introduce binary variables with the following interpretations.
Variable  $ x^t_{q, i, j} \in \set{0,1} $
takes value~$1$ if qubit $ q \in Q $
changes its assignment from node $ i \in V $ to node $ j \in V $
between layer $t$ and $ t + 1 $ from $ \set{1, \ldots, N} $,
and~$0$ otherwise.
The auxiliary variable $ w_{q, i}^ t \in \set{0,1} $
takes value~$1$ when qubit $ q \in Q $
is located at node $ i \in V $
in layer $ t \in \set{1, \ldots, N} $,
and~$0$ otherwise.
Further, the auxiliary variable $ z^t_{(p ,q), (i, j)} \in \set{0,1} $ takes value~$1$ when gate $ (p, q) \in L^t $
is performed along edge $ (i, j) $ and~$0$ otherwise.
With these notions, the TAP is modeled
by the following quadratic binary program, see Ref.~\cite{WBLW_2023}:
\begin{mini!}
	{x}{\displaystyle\sum_{t = 1}^{L-1}\displaystyle\sum_{q\in Q}\displaystyle\sum_{i,j\in V\times V} d_H(i,j) x^t_{q,i,j}\label{eq:Objective}}
	{\label{eq:edgemodel}}{}
	\addConstraint{w^t_{q,i}}			{=\displaystyle\sum_{j\in V}x^t_{q,i,j}\label{eq:flowout} }{\quad\forall 1\leq t \leq N-1,\forall i \in V,\forall q \in Q}
	\addConstraint{w^t_{q,i}}			{=\displaystyle\sum_{j\in V}x^{t-1}_{q,j,i}\label{eq:flowin} }{\quad\forall 2 \leq t \leq N,\forall i \in V,\forall q \in Q}
 	\addConstraint{\displaystyle\sum_{i\in V} w_{q,i}^t}{= 1\label{eq:bij1}}{\quad\forall 1\leq t\leq N,\forall q\in Q}
	\addConstraint{\displaystyle\sum_{q\in Q} w_{q,i}^t}{\leq1 \label{eq:bij2}}{\quad\forall 1\leq t\leq N,\forall i\in V}
	\addConstraint{\displaystyle\sum_{(i,j)\in A_H} z^t_{(p,q),(i,j)}}{=1\label{eq:gates} }{\quad\forall 1\leq t\leq N,\forall (p,q)\in L^t}
	\addConstraint{ z^t_{(p,q),(i,j)}}{= w_{p,i}^t\cdot w_{q,j}^t\label{eq:zdef}}{\quad\forall 1\leq t\leq N,\forall (p,q)\in L^t,\forall (i,j)\in A_H}
  	\addConstraint{\displaystyle\sum_{q\in Q} w_{q,i}^t}{=\sum_{q\in Q} w_{q,i}^1 \label{eq:subg}}{\quad\forall 2\leq t\leq N,\forall i\in V}
	\addConstraint{ w_{q,i}^t}{\in \{0,1\}\label{eq:binw}}{\quad\forall 1\leq t\leq N,\forall q\in Q,\forall i\in V}
	\addConstraint{ x^t_{q,i,j}}{\in \{0,1\}\label{eq:binx}}{\quad\forall 1\leq t< N,\forall q\in Q,\forall (i,j)\in V\times V}
	\addConstraint{ z^t_{(p,q),(i,j)}}{\in \{0,1\}\label{eq:binz}}{\quad\forall 1\leq t\leq N,\forall (p,q)\in L^t,\forall (i,j)\in A_H.}
\end{mini!}
\\[-\baselineskip]
Informally speaking, a feasible solution to Model~\eqref{eq:edgemodel} gives a mapping from circuit qubits to hardware qubits for each layer such that 
circuit qubits involved in two-qubit gates are always mapped to neighboring hardware qubits.
An optimal solution minimizes the total distance logical qubits move on the hardware graph $H$, which gives rise to a lower bound on the number of swaps required~\cite{Miltzow_2016}.
Constraints~\eqref{eq:flowout} and~\eqref{eq:flowin} ensure circuit qubit conservation.
Constraints~\eqref{eq:bij1} ensure that every circuit qubit is allocated to exactly one hardware qubit whereas
Constraints~\eqref{eq:bij2} ensure that every hardware qubit holds at most one circuit qubit.
Via Constraints~\eqref{eq:gates},
we enforce that every gate is implemented.
Constraints~\eqref{eq:zdef} demand that a gate
is implemented along an arc if and only if circuit qubits
are located at the hardware qubits of the arc.
Constraints~\eqref{eq:subg} enforce that the subgraph 
at which circuit qubits are located
is fixed for all time steps.
However, the particular choice of this subgraph remains subject to optimization.
We note that Constraints~\eqref{eq:zdef} contain quadratic expressions.
They need to be linearized to employ branch-and-cut solvers.
In Ref.~\cite{WBLW_2023} they are linearized by a standard McCormick approach replacing them by
\begin{align}
	\label{eq:zmccormick}
	z^t_{(p, q), (i, j)} &\leq w_{p, i}^t\notag\\
	z^t_{(p, q), (i, j)} &\leq w_{q, j}^t\notag\\
	z^t_{(p, q), (i, j)} &\geq w_{p, i}^t + w_{q, j}^t - 1.
\end{align}

We now derive an improved linearization by first considering the polytope defined by the convex hull of all binary-valued points satisfying \eqref{eq:zdef}.
This forms the well-known \emph{Boolean Quadric Polytope} (BQP), first introduced by Padberg~\cite{Padberg1989}
and extensively studied in the literature~\cite{GupteNote16,Gupte2020,AB2020,SRIPRATAK2022}.
For a comprehensive review we refer to Ref.~\cite{Deza1997}.
For arbitrary BQPs, no polynomial-sized linear description exists.
However, when additionally considering the single-choice constraints \eqref{eq:bij1} and \eqref{eq:gates} from the TAP model, we derive a linear description of the associated polytope
\begin{align}
	P \coloneqq \conv{\set{(w^t_{p,i},w^t_{q,j},z^t_{p,q,i,j}) \in \set{0,1}^{{\sum_t |L^t|\cdot |V|+\sum_t |L^t|\cdot |V| + \sum_t |L^t|\cdot |A_H|}} \mid \eqref{eq:zdef},\eqref{eq:gates},\eqref{eq:bij1}}}\ .
\end{align}
A complete linear description of $P$ improves the linear relaxation bound of Model~\eqref{eq:edgemodel}.
We first observe that $P$ is the cross product of several smaller-dimensional polytopes.
Clearly, when considering two different layers $t$ and $t'$, the set of variables occurring in \eqref{eq:zdef}, \eqref{eq:gates} and \eqref{eq:bij1} for $t$ and $t'$ are disjoint.
The same holds true when considering different $(p,q),(r,s) \in L^t$ for fixed $t$
since gates in the same layer act on disjoint qubits, i.e., $(p,q),(r,s) \in L^t$ implies $\set{p,q}\cap\set{r,s} = \emptyset$.
Thus, it follows
\begin{align}\label{eq:bqp_disjoint}
P = \bigotimes_{t=1}^L\bigotimes_{(p,q)\in L^t} P^t_{p,q}\,
\end{align}
where
\begin{align}
	P^t_{p,q} \coloneqq \conv{\set{(w^t_{p,i},w^t_{q,j},z^t_{p,q,i,j}) \in \set{0,1}^{{|V|+|V| + |A_H|}}\mid  \eqref{eq:zdef},\eqref{eq:gates},\eqref{eq:bij1}}}
\end{align}
is the BQP with single-choice constraints for fixed $t$ and fixed $(p,q) \in L^t$.
Thus, without loss of generality we consider a fixed $t$ and a fixed $(p,q) \in L^t$ and derive a complete description of $P^t_{p,q}$ in terms of linear inequalities.
Because of \eqref{eq:bqp_disjoint}, this yields a complete linear description of $P$.

For our further derivation, we first associate a graph $G$ with $P^t_{p,q}$.
$G$ has a vertex for every $w$ variable.
Edges in $G$ correspond to $z$ variables, i.e., they connect vertices whose corresponding $w$ variables occur in a bilinear term in the right hand side of \eqref{eq:zdef}.
Furthermore, for each right hand side $w^t_{p,i}\cdot w^t_{q,j}$ in \eqref{eq:zdef}, we have $p\neq q$.
This is because a two-qubit gate acts on two different qubits, i.e., $(p,q)\in L^t$ implies  $p\neq q$.
From this it directly follows that $G$ is bipartite with vertex partitions $\set{(p,i)\mid i \in V}$ and $\set{(q,j)\mid j \in V}$.
The edges of $G$ are given by $\set{\set{(p,i),(q,j)} \mid (i,j)\in A_H}$,
see Fig.~\ref{fig:G} for an illustration.
By definition of $A_H$ it follows that $G_{P}$ is symmetric in the sense that for every edge $\set{(p,i),(q,j)}$ there is an edge $\set{(p,j),(q,i)}$.
Summarizing, we can write $G$ as $(V_1 \dot{\cup} V_2,E)$ with $V_1 =\set{(q,i)\mid i \in V}$, $V_2 =\set{(p,j)\mid j \in V}$ and $E=\set{\set{(p,i),(q,j)} \mid (i,j)\in A_H}$.

\begin{figure}
	\centering
	\subfloat[]{
	\includegraphics[height=2.5cm]{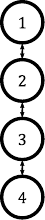}
	\label{fig:H}
	}\qquad
	\subfloat[]{
	\includegraphics[height=2.5cm]{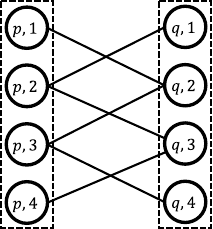}
	\label{fig:G_mc2}
	}
	\caption{(a)~Example for the directed hardware graph~$H=(V,A)$, representing four linearly connected qubits.
        (b)~Example for the bipartite graph~$G$ arising from $H$ associated with the polytope $P^t_{p,q}$
		for fixed $t$ and fixed $(p,q)\in L^t$.
		Indicated are the choice constraints \eqref{eq:bij1} for $p$ and $q$ (dashed).}
	\label{fig:G}
\end{figure}

We now derive a complete linear description of $P^t_{p,q}$ as a special case of a more general result for arbitrary bipartite graphs.
To this end, let $G=(X\dot{\cup}  Y,E)$ be a bipartite graph.
We consider the set of equations
\begin{align}
	z_{ij} &= x_i \cdot y_j \qquad \forall \set{i,j}\in E \label{eq:quadz}\\
	\sum_{i\in X}x_i &= 1 \label{eq:mcx}\\
	\sum_{j\in Y}y_j &= 1 \label{eq:mcy}\\
	\sum_{ij\in E} z_{ij} &= 1 \label{eq:mcz}\,,
\end{align}
see Fig.~\ref{fig:G_abp} for an illustration. Here, the shorthand $ij$ refers to the edge $\set{i,j}\in E$.
\begin{figure}
	\centering
	\subfloat[]{
		\includegraphics[height=4cm]{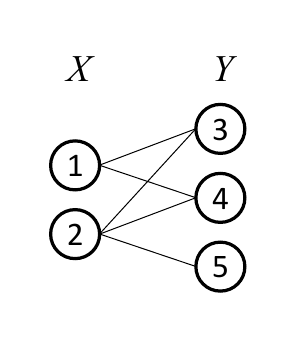}
		\label{fig:G_abp}
	}
	\subfloat[]{
		\includegraphics[height=4cm]{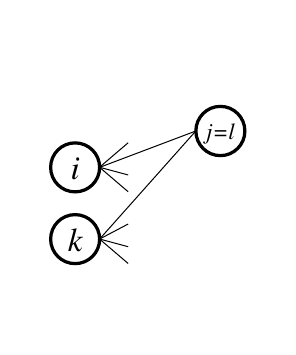}
		\label{fig:G_abp_case1}
	}
	\subfloat[]{
		\includegraphics[height=4cm]{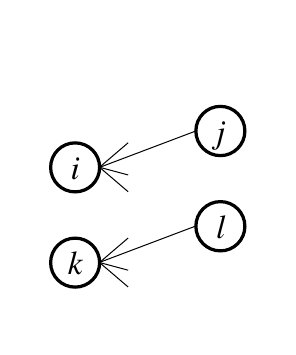}
		\label{fig:G_abp_case2}
	}
	
	\caption{Illustration for the setting of Lemma \ref{lem:BQPMCXYZ}.
		(a) An arbitrary bipartite graph for the definition of the polytope $P_{X,Y,Z}(G)$.
 		(b) Fractional variables in the first case in the proof. 
 		(c)~Fractional variables in the second case in the proof.
 		}
	\label{fig:G_lem}
\end{figure}
We study the associated polytope \[P_{X,Y,Z}(G)\coloneqq\conv{\{\set{0,1}^{|X|+|Y|+|E|}\mid \eqref{eq:quadz}, \eqref{eq:mcz}\}}\ .\]
We note that \eqref{eq:mcx} and \eqref{eq:mcy} are implied by \eqref{eq:quadz} and \eqref{eq:mcz}.
As we will show, the following equalities are valid for $P_{X,Y,Z}(G)$,
\begin{align}
		\sum_{j \in N(i)} z_{ij} &= x_{i}\quad \forall i \in X \label{eq:eqx2}\\
		\sum_{i \in N(j)} z_{ij} &= y_{j}\quad \forall j \in Y\ .\label{eq:eqy2}
\end{align}
Here, $N(i)$ denotes the neighborhood of $i\in G$.
We remark that applying the well-known Reformulation-Linearization Technique (see Ref.~\cite{Sherali1992ANR}) to \eqref{eq:mcx}, \eqref{eq:mcy} and variable bounds only yields
a \qm{$\leq$} in \eqref{eq:eqx2} and \eqref{eq:eqy2} and is thus not sufficient to derive the considered formulation.
In Ref.~\cite{GupteNote16} the authors show that $\eqref{eq:mcx}, \eqref{eq:eqx2}, \eqref{eq:eqy2}$
suffice to describe $P_{X,Y,Z}(G)$ for the complete bipartite case $G=K^{m,n}$.
Here, we prove it for arbitrary bipartite graphs.
\begin{lemma}\label{lem:BQPMCXYZ}
Let $G=(X\dot{\cup}  Y,E)$ be a bipartite graph.\\
Let $\tilde{P}:= \{[0,1]^{|X|+|Y|+|E|}\mid  \eqref{eq:mcx}, \eqref{eq:eqx2}, \eqref{eq:eqy2}\}$.
Then it holds $\tilde{P}=P_{X,Y,Z}(G)$.
\end{lemma}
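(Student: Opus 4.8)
The plan is to prove the two inclusions $P_{X,Y,Z}(G)\subseteq\tilde P$ and $\tilde P\subseteq P_{X,Y,Z}(G)$ separately, after first identifying the integer generators of $P_{X,Y,Z}(G)$. The feasible integer points are exactly the \emph{edge indicators}: for each edge $e=\set{i,j}\in E$ let $\chi^e$ be the $0/1$-vector with $x_i=y_j=z_e=1$ and all remaining entries $0$. Indeed, \eqref{eq:mcz} forces a single $z$-entry to equal one, and \eqref{eq:quadz} together with the single-choice structure \eqref{eq:mcx}, \eqref{eq:mcy} inherited from the model then pins $x$ and $y$ to the two endpoints of that edge; conversely every $\chi^e$ is feasible. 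For the easy inclusion I would simply verify that each $\chi^e$ satisfies \eqref{eq:mcx}, \eqref{eq:eqx2} and \eqref{eq:eqy2}, which is immediate since the unique nonzero $z$-entry lies on $e$, so the degree sums in \eqref{eq:eqx2}, \eqref{eq:eqy2} reproduce exactly the two endpoint indicators. As $\tilde P$ is defined by linear (in)equalities, hence convex, containing all generators of $P_{X,Y,Z}(G)$ already gives $P_{X,Y,Z}(G)\subseteq\tilde P$.

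The heart of the argument is the reverse inclusion $\tilde P\subseteq P_{X,Y,Z}(G)$, that is, integrality of $\tilde P$. My approach is to exhibit an explicit convex decomposition rather than to invoke a general integrality theorem. Take any $(x,y,z)\in\tilde P$. Summing \eqref{eq:eqx2} over $i\in X$ and using \eqref{eq:mcx} yields $\sum_{e\in E}z_e=\sum_{i\in X}x_i=1$, while the box constraints give $z_e\ge 0$; thus the $z_e$ are a valid system of convex weights. I then claim $(x,y,z)=\sum_{e\in E}z_e\,\chi^e$. The $z$-block is reproduced trivially; the $i$-th coordinate of the $x$-block of $\sum_e z_e\chi^e$ equals $\sum_{j\in N(i)}z_{ij}$, which is $x_i$ precisely by \eqref{eq:eqx2}, and symmetrically the $y$-block is recovered by \eqref{eq:eqy2}. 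Since every $\chi^e$ is an integer point of $P_{X,Y,Z}(G)$, this writes an arbitrary element of $\tilde P$ as a convex combination of such points, proving membership.

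The main obstacle, and the reason the equalities \eqref{eq:eqx2}, \eqref{eq:eqy2} are exactly the right object, is that they must hold as equalities on all of $\tilde P$: the Reformulation-Linearization Technique only delivers the inequalities $\sum_{j\in N(i)}z_{ij}\le x_i$, because the products over non-neighbours are discarded, and these alone would leave fractional vertices uncut. Upgrading \qm{$\le$} to \qm{$=$} is what bipartiteness buys. A route that makes this structural fact explicit, and that avoids pre-guessing the generators, is to rule out fractional vertices directly: assuming a vertex of $\tilde P$ with fractional $z$-support, one distinguishes the case where this support contains a cycle---necessarily even, since $G$ is bipartite (Fig.~\ref{fig:G_abp_case1})---from the case where it is a forest (Fig.~\ref{fig:G_abp_case2}), and perturbs $z$ by $\pm\varepsilon$ alternately along a cycle or a maximal path, leaving \eqref{eq:eqx2}, \eqref{eq:eqy2} intact at interior vertices and contradicting extremality. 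This is the combinatorial counterpart of total unimodularity of the bipartite node--edge incidence matrix underlying \eqref{eq:eqx2}, \eqref{eq:eqy2}, and it is precisely the step where the absence of odd cycles in $G$ is indispensable.
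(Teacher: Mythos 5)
Your proof is correct, and for the hard inclusion it takes a genuinely different and in fact shorter route than the paper. The paper splits $\tilde{P}\subseteq P_{X,Y,Z}(G)$ into two steps: it first checks that every \emph{integer} vertex of $\tilde{P}$ satisfies \eqref{eq:quadz}, and then rules out fractional vertices by exhibiting, for any fractional point, an explicit direction $a$ (built from a fractional $x_i$, a fractional $z_{ij}$ with $j\in N(i)$, and their counterparts $x_k,z_{kl}$) such that $(x,y,z)\pm\varepsilon a\in\tilde{P}$, with a two-case analysis depending on whether $j=l$ or $j\neq l$. Your argument replaces both steps by the single identity $(x,y,z)=\sum_{e\in E}z_e\,\chi^e$, where the weights are legitimate convex multipliers because $z\geq 0$ and $\sum_e z_e=\sum_{i\in X}\sum_{j\in N(i)}z_{ij}=\sum_i x_i=1$ (here bipartiteness is used exactly once, to count each edge once), and the coordinate check is immediate from \eqref{eq:eqx2} and \eqref{eq:eqy2}. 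This is cleaner, avoids the case distinction, and as a bonus identifies the vertex set of $P_{X,Y,Z}(G)$ as precisely the edge indicators; the paper's perturbation argument is the more generic technique one would reach for when the generators cannot be guessed, and your secondary sketch (perturbing along cycles or maximal paths in the fractional support, in the spirit of total unimodularity) is essentially a structured version of what the paper actually does. One caveat you share with the paper: your opening claim that the integer points of $P_{X,Y,Z}(G)$ are \emph{exactly} the edge indicators needs the choice constraints \eqref{eq:mcx}, \eqref{eq:mcy} to be counted among the defining constraints; the paper asserts they are implied by \eqref{eq:quadz} and \eqref{eq:mcz}, but for a disconnected $G$ (e.g.\ two disjoint edges) this implication fails, so both proofs should be read as taking \eqref{eq:mcx} and \eqref{eq:mcy} as part of the integer description --- this does not affect the application, where the single-choice constraints \eqref{eq:bij1} are explicitly present.
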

\begin{proof}
	First, we show $P_{X,Y,Z}(G)\subseteq \tilde{P}$. 
    To this end, we need to show the validity of \eqref{eq:mcx}, \eqref{eq:eqx2} and \eqref{eq:eqy2} for $P_{X,Y,Z}(G)$.
	As already noted earlier, \eqref{eq:mcx} directly follows from \eqref{eq:quadz} and \eqref{eq:mcz}.
	Let $(x,y,z)\in\set{0,1}^{|X|+|Y|+|E|}$ be a vertex of $P_{X,Y,Z}(G)$.
	Consider the right-hand-side of \eqref{eq:eqx2}.
	In the case $x_i=0$ it directly follows from \eqref{eq:quadz} that $\sum_{j \in N(i)} z_{ij}=0$.
	On the other hand, if $x_i=1$, we have $\sum_{j \in N(i)} z_{ij}=1$ by constraints \eqref{eq:mcx}, \eqref{eq:mcz} \tb{and~\eqref{eq:quadz}}.
    Thus, \eqref{eq:eqx2} is valid for $P_{X,Y,Z}(G)$.
	The same argument applies for showing the validity of \eqref{eq:eqy2}.
	
	Now we show $\tilde{P}\subseteq P_{X,Y,Z}(G)$.
	We do so by showing that all vertices of $\tilde{P}$ are in $P_{X,Y,Z}(G)$.
	First, we note that $y$-choice \eqref{eq:mcy} and $z$-choice \eqref{eq:mcz} are valid for $\tilde{P}$:
	since $G$ is bipartite, we have 
	\[\sum_{ij \in E} z_{ij}=\sum_{i\in X}\sum_{j \in N(i)} z_{ij} \stackrel{\eqref{eq:eqx2}}{=}\sum_{i \in X} x_i \stackrel{\eqref{eq:mcx}}{=} 1\ .\]
	Analogously, \[1=\sum_{ij \in E} z_{ij}=\sum_{j\in Y}\sum_{i \in N(j)} z_{ij} \stackrel{\eqref{eq:eqy2}}{=}\sum_{j \in Y} y_j\ .\]
	Let $(x,y,z)\in \set{0,1}^{|X|+|Y|+|E|}$ be an integer vertex of $\tilde{P}$.
	Then, by $x$-choice \eqref{eq:mcx} there is exactly one $i\in X$ such that $x_i=1$.
	Analogously by $y$-choice \eqref{eq:mcy}, there is exactly one $j\in Y$ such that $y_j = 1$ and by $z$-choice \eqref{eq:mcz} exactly one $kl\in E$ such that $z_{kl}=1$.
	Assume $k\neq i$. Then, $\sum_{j \in N(i)} z_{ij}=0$, \tb{which is} a contradiction to \eqref{eq:eqx2} for $i$. Thus, $k=i$.
	By the same argument for $j$ we conclude $l=j$.
	Altogether, $(x,y,z)$ satisfies \eqref{eq:quadz} and thus $(x,y,z)\in P_{X,Y,Z}(G)$.
	
	Next, we show that all vertices of $\tilde{P}$ are integer.
	Let $(x,y,z)\in [0,1]^{|X|+|Y|+|E|}$ be a fractional point in $\tilde{P}$.
	Then at least one $x_i$ or one $y_i$ is fractional, otherwise $(x,y,z)$ would be integer.
	Without loss of generality,\ let $x_i$ be fractional.
	By $x$-choice $\eqref{eq:mcx}$ we know there is at least one other fractional $x_k$ with $k\neq i$.
	Furthermore, by Equality \eqref{eq:eqx2} for $i$, it follows that $\sum_{j \in N(i)} z_{ij}$ is fractional.
	This means at least one $z_{ij}$ for $j \in N(i)$ is fractional.
	The same argument gives a fractional $z_{kl}$ for $l \in N(k)$.
	Moreover, by Equality \eqref{eq:eqy2} for $j$ it follows $y_j>0$, since $z_{ij} > 0$.
	Analogously, $y_l>0$.
	
	Case 1: $j=l$, see Fig.~\ref{fig:G_abp_case1}.
    \tb{We define a vector $a\in\set{-1,0,1}^{|X|+|Y|+|E|}$ by	$a = e_i + e_{ij} - e_k - e_{kl}$, where $e_m\in \set{0,1}^{|X|+|Y|+|E|}$ denotes the $m$-th unit vector.}
	Then there is an $\varepsilon > 0$ such that $(x,y,z) \pm \varepsilon a \in \tilde{P}$.
	
	Case 2: $j\neq l$, see Fig.~\ref{fig:G_abp_case2}. Since $y_j>0$ and $y_l>0$, we have $y_j\neq 1 \neq y_l$.
    \tb{We define $a\in\set{-1,0,1}^{|X|+|Y|+|E|}$ by	$a = e_i + e_j + e_{ij} - e_k -e_l - e_{kl}$.}
	Then, there is an $\varepsilon > 0$ such that $(x,y,z) \pm \varepsilon a \in \tilde{P}$.
	
	In either case, $(x,y,z)$ is not a vertex.
	Thus, $\tilde{P}$ has only integer vertices.
\end{proof}
Applying Lemma~\ref{lem:BQPMCXYZ} to the TAP model~\eqref{eq:edgemodel}, we replace Constraints~\eqref{eq:bij1} and~\eqref{eq:zdef} by
\begin{align}
	\sum_{j \in N(i)} z^t_{(p,q)(i,j)} &= w^t_{p,i}\quad\forall 1\leq t\leq L,\forall (p,q)\in L^t, \forall i \in V\\
	\sum_{i \in N(j)} z^t_{(p,q)(i,j)} &= w^t_{q,j}\quad\forall 1\leq t\leq L,\forall (p,q)\in L^t, \forall j \in V\ .
\end{align}
In our computational studies it turned out that this linearization results in a runtime improvement by a factor of two on average compared to the McCormick relaxation \eqref{eq:zmccormick}.
Therefore, we use this from now on.

\subsection{Noise Suppression via Qubit Routing}
Next, we generalize Model~\eqref{eq:edgemodel} to also suppress noise. 
We simultaneously aim for a small number of swap gates and noise suppression. 
These two, possibly conflicting, criteria lead to a multi-criteria optimization problem that we solve via a standard single-objective problem with an objective built from the weighted sum of both criteria. 
Therefore, noise data is incorporated in the basic Model~\eqref{eq:edgemodel} by extending the cost function~\eqref{eq:Objective} with additional terms.
Here, we first define a weighting factor $0\leq \lambda \leq 1$ which interpolates between only considering swap count ($\lambda = 0$) and only considering noise robustness ($\lambda = 1$).

Moreover, we allow additional costs $E_i$ for hardware qubits $i\in V$, costs $E_{(i,j)}$ for native CX $(i,j)\in A$ and costs $E_{(i,j),k}$ for CX-SQ crosstalk between $(i,j) \in A$ and $k\in N((i,j))$, where $N((i,j))\coloneqq (N(i)\setminus \{ j \} ) \cup (N(j)\setminus \{ i \} )$ is the neighborhood of arc $(i,j)$.
These additional costs  quantify the noise level of the associate hardware component.
For a physical qubit $i\in V$, we set $E_i$ as the average of readout error rate and single-qubit gate error rate.
Similarly, for a native CX gate $(i,j)\in A$, $E_{(i,j)}$ is defined as the average two-qubit error rate reported by the backend.
Taking the data provided by standard calibration routines for the quantum device used in this work,
the values of $E_{(i,j)}$ and $E_i$ are typically in the order of 1 \%.
For CX-SQ crosstalk between CX $(i,j)$ and qubit $k$, $(i,j) \in A,\ k\in N((i,j))$, let $r^{k}_{(i,j)}$ be the corresponding ERR, see Sec.~\ref{sec:srb}.
Then, we set
\begin{align}
E_{(i,j),k} \coloneqq \max \left\{0,\big( r^{k}_{(i,j)} - 1 \big) E_k \right\}\ .
\end{align}
Thus, if no crosstalk exists, \ie $r^{k}_{(i,j)}=1$, then $E_{(i,j),k}=0$.
Taking the experimental data from Sec.~\ref{sec:srb}, typical values of $E_{(i,j),k}$ lie between zero and 35 \%.
Having defined the individual costs, the overall cost function now reads
\begin{align}\label{eq:lambda}
c \coloneqq (1-\lambda)\cdot c_\mathrm{swap} + \lambda \cdot c_\mathrm{noise}
\end{align}
where
\begin{align}
    c_\mathrm{swap} \coloneqq \sum_{t = 1}^{L-1}\sum_{q\in Q}\sum_{i,j\in V\times V} d_H(i,j) x^t_{q,i,j}
\end{align}
as before, and
\begin{subequations}
\begin{align}
    c_\mathrm{noise} &\coloneqq \sum_{i \in V} E_i  \sum_{t = 1}^{L} \sum_{q\in Q}w^t_{q,i} \label{eq:sq_pen}\\
    &+ \sum_{(i,j)\in A} E_{(i,j)} \sum_{t = 1}^{L} \sum_{(p,q)\in L^t}  z^t_{(p,q),(i,j)} \label{eq:cx_pen}\\
    &+ \sum_{(i,j)\in A} \sum_{k \in N((i,j))} E_{(i,j),k} \sum_{q \in Q}  \sum_{t = 1}^{L} \sum_{(r,s)\in L^t} w^t_{q,k} \cdot z^t_{(r,s),(i,j)} \ . \label{eq:xt_pen}
\end{align}
\end{subequations}
Terms \eqref{eq:sq_pen} and \eqref{eq:cx_pen} penalize the individual use of single qubit $i$ and CX gate $(i,j)$ with a penalty factor of $E_i$ and $E_{(i,j)}$, respectively.
The term \eqref{eq:xt_pen} gives an additional penalty of $E_{(i,j),k}$ if both CX $(i,j)$ and qubit $k$ are simultaneously used in layer $t$.
Here, we remark that also CX-CX crosstalk is penalized implicitly by \eqref{eq:xt_pen}.
Simultaneous use of neighboring CX gates $(i,j)$ and $(k,l)$, where $k\in N(j)$, causes a penalty of $E_{(i,j),k}+E_{(k,l),j}$
since $z^t_{(p,q),(i,j)}=z^t_{(r,s),(k,l)}=1$ for some $(p,q),(r,s)\in L^t$ implies $w^t_{r,k}=w^t_{q,j}=1$.
Note, that \eqref{eq:xt_pen} is a quadratic expression.
It is linearized analogously to \eqref{eq:zmccormick} by introducing auxiliary variables and McCormick inequalities.

\paragraph{Dynamical Decoupling.}
After crosstalk aware routing, there might still exist gate-qubit pairs suffering from crosstalk.
To suppress remaining crosstalk as well as static ZZ crosstalk, we insert DD sequences on qubits during idle times~\cite{Tripathi_2022,Viola_1998}.

\section{Evaluation of Crosstalk Mitigation on QAOA}\label{sec:exp}
We now evaluate the noise-aware routing algorithm in the context of the Quantum Approximate Optimization Algorithm (QAOA).
QAOA is a well-known heuristic algorithm for a general class of optimization problems, originally proposed in Ref.~\cite{farhi_2014quantum}.
QAOA produces candidate solutions to the optimization problem by sampling from a circuit.
While there are many methods to error mitigate expectation values it is not yet known how to efficiently and cheaply error mitigate samples.
In QAOA, device noise can be compensated for by drawing more samples~\cite{Barron2023}.
However, if the noise is too strong, the sampling overhead becomes larger than an exponential exhaustive search of the solution space.
Therefore, it is crucial to reduce noise, such as crosstalk, in sampling based applications which is why we focus on QAOA.

We evaluate our noise-aware routing algorithm on instances of the Maximum Cut problem (MaxCut),
which is equivalent to quadratic unconstrained binary optimization (QUBO)~\cite{Hammer_1965, DESIMONE_1990}.
Given a graph $G=(V,E)$, MaxCut asks for a partition of the nodes such that the number of
edges intersecting the partitions is maximum.
The MaxCut problem is an archetypical NP-hard optimization problem~\cite{Karp_1972}.
It is intensively studied in classical computation~\cite{Hadlock_1975,Barahona_1989,Liers_2003,Liers_2012,Rehfeldt2023}
and is often examined as a benchmark for QAOA~\cite{Weidenfeller2022, Harrigan_2021,Lykov_2022,Tate_2023}.

When applied to MaxCut, QAOA prepares the state
\begin{align}
\ket{\boldsymbol{\beta},\boldsymbol{\gamma}} = \prod_{k=1}^p e^{-i\beta_kH_M} e^{-i\gamma_k H_P} \ket{+}^{\otimes n}
\end{align}
where $ H_P = \sum_{ij\in E} \sigma_i^z \sigma_j^z$ and $H_M=-\sum_{i=1}^n\sigma_i^x$ are the problem and mixing Hamiltonian, respectively.
The initial state $\ket{+}^{\otimes n}$ is the equally weighted superposition of all solutions and the ground state of $H_M$.
The complexity of the transpiled circuit, here defined as the number of gates, is controlled by and proportional to the hyper-parameter $p\in \N$, called \emph{depth}.
In the experiments, we choose $p=1,\dots,7$.
The parameters $\boldsymbol{\beta}=(\beta_1,\dots,\beta_p$) and $\boldsymbol{\gamma}=(\gamma_1,\dots,\gamma_p)$ are usually 
optimized in a classical feedback loop to minimize the expected cut size $\bra{{\beta},\boldsymbol{\gamma}}H_P\ket{{{\beta},\boldsymbol{\gamma}}}$.
Crucially, although QAOA is trained on an expectation value, solutions to the MaxCut problem are ultimately retrieved via sampling~\cite{Barron2023}.
Moreover, routing is required to execute QAOA since
its implementation applies two-qubit gates along the edges of the underlying MaxCut graph, which is usually not a subgraph of the hardware connectivity graph.

The approximation ratio of QAOA is an easily accessible performance metric.
Here, we define the approximation ratio as the expected cut size divided by the optimum value.
QAOA is a parameterized algorithm and its performance, i.e.~the expected cut size, heavily depends on the values of $\boldsymbol{\gamma}$ and $\boldsymbol{\beta}$.
To avoid any bias in expected cut size resulting from a sub-optimal choice of $\boldsymbol{\gamma}$ and $\boldsymbol{\beta}$ we calculate these parameters beforehand with a classical, noise-free simulation
and an off-the-shelf optimizer~\cite{Powell1994ADS}.
This allows us to focus on the effect of the routing method.

As MaxCut instances, we consider a 14 vertex line, a 10 vertex three-regular graph and a complete graph on 5 vertices.
Intuitively, problem graphs with high edge density require many swaps when routing onto sparse hardware architectures~\cite{Weidenfeller2022}.
Therefore, denser graphs generally result in noisier results which is why we study a line, a three-regular and a fully connected graph.
These graphs increase in edge density which causes the routing algorithms to insert more swaps.
For example, the hardware graph shown in Fig.~\ref{fig:cm_ehningen} contains several line subgraphs with 14 vertices.
Thus, the line instance requires no additional swap gates if the initial mapping is chosen to be an isomorphism between the MaxCut graph and one of the line subgraphs.
However, this is not the case for the three-regular graph and the complete graph.
Here, additional swap gates are necessary and a trade-off between swap count and noise level has to be achieved by the noise-aware routing algorithms.

\subsection{Evaluated Routing Methods}
We transpile the QAOA circuits to \emph{ibmq\_ehningen} by several established noise-aware routing methods and compare the results after execution.
The noise data required for noise-aware routing is taken from daily calibration routines and, in the case of CX-SQ crosstalk noise, is retrieved from the experiments described in Sec.~\ref{fig:srb_ex}, also executed on the same day as the QAOA circuits.
Our crosstalk measurements revealed that severe crosstalk is typically limited to a small subset of qubits, similar to Fig.~\ref{fig:hist_cxrb}.
Thus, we only take the ten largest ERR values and set the rest to one.
This reduces the number of quadratic terms in Equation~\eqref{eq:xt_pen} drastically since most $E_{(i,j),k}$ values are zero.
As DD sequences, we choose ten equally spaced X gates.
This choice is based on experience and studies from literature~\cite{Tripathi_2022,Viola_1998}.

Transpilation is performed by the following methods.
\begin{enumerate}
    \item[(M1)] Use the method described in Section~\ref{sec:mitigation} with $\lambda = 0.5$ in Equation~\eqref{eq:lambda}, labeled noise-aware token allocation problem (NATAP). The choice of $\lambda = 0.5$ is based on empirical studies, compare Figs.~\ref{fig:cx_vs_lambda_line}, \ref{fig:cx_vs_lambda_3reg}, \ref{fig:cx_vs_lambda_compl}.
    The binary linear program \eqref{eq:edgemodel} is solved via \emph{Gurobi}~\cite{gurobi} with a time limit of 900~s.
    This value is also based on empirical studies which showed that a near-optimal solution is usually found within the first 100~s of the solution process.
    For $p>1$, we employ the commutation of gates in QAOA and construct the routed circuits by repeating and alternatingly reversing the routed $p=1$ circuit.
    \tb{The source code to our method (M1) is published in \cite{tap_ijoc}.}

    \item[(M2)] The same as (M1), but with $\lambda = 0$, \ie we do not consider noise data and only minimize swap count, labeled TAP.
    Comparing (M1) to this method allows us to study the influence of incorporating noise data.

    \item[(M3)] Choose the best line with respect to the product of CX errors for an initial layout, as proposed in Refs.~\cite{Sack_2023} and~\cite{Ferrari_2022}.
    \tb{We determine the best line by simply enumerating all lines of the required length in the hardware graph.}
    If necessary, perform the SABRE heuristic for routing~\cite{Li2019} \tb{whose source code is available on-line~\cite{sabre}.}
    This method comes at low computational costs (runtimes in the order of 1~s), but considers noise only roughly via CX errors.
    In particular, no crosstalk errors are considered.
    
    \item[(M4)] The same as (M3), but with additional DD sequences inserted on idle qubits.
    This method allows us to investigate the influence of DD.
    
    \item[(M5)] Use the noise-adaptive layout method proposed in Ref.~\cite{Murali_2019} as an initial layout and SABRE as routing, if necessary.
    \tb{The source code for the layout is available on-line~\cite{noiseadaptive}.}
    Afterwards, the routed circuit is scheduled via the CX-CX crosstalk aware scheduling method proposed in Ref.~\cite{Murali_2020}.
    \tb{The source code for the scheduling is available on-line~\cite{xtsched}.}
    Here, CX-CX crosstalk is measured via SRB.
    While layout and routing is fast (order of 1~s), the scheduling method relies on solving a satisfiability problem, which  requires on the order of 1 minute to solve.
    Apart from (M1), this is the only method that accounts for crosstalk.
    However, (M5) mitigates crosstalk by delaying gates which increases decoherence.
        
    \item[(M6)] Use Tket's noise-adaptive layout and routing method~\cite{Sivarajah_2020}.
    \tb{The Tket source code is available on-line~\cite{tket}.}
    This method is fast (order of 1~s), but considers noise data only in the choice of an initial layout and does not consider crosstalk.
\end{enumerate}

We use IBM's SDK Qiskit to construct circuits and communicate with the quantum backend~\cite{Qiskit}.
We execute each transpiled circuit $100,000$ times on the quantum hardware and compute the achieved approximation ratio.
Additionally, we compute the approximation ratio retrieved from an ideal simulation
as well as the approximation ratio corresponding to the uniform distribution, which resembles a completely depolarized quantum computer.

\subsection{Computational Results}
As expected, the ideal approximation ratio monotonically increases with $p$ on all three instances,
while the approximation ratios returned from real hardware tend towards the approximation ratio of the uniform distribution for large $p$, see Figs.~\ref{fig:qaoa_14q_line_appr_vs_depth}, \ref{fig:qaoa_10q3reg_appr_vs_depth}, \ref{fig:qaoa_5q_compl_appr_vs_depth}.
However, when comparing results from the different routing methods, the method proposed in this work, (M1), achieves the highest approximation ratios across all instances and depths (blue lines in Figs.~\ref{fig:qaoa_14q_line_appr_vs_depth}, \ref{fig:qaoa_10q3reg_appr_vs_depth}, \ref{fig:qaoa_5q_compl_appr_vs_depth}).  
Compared to methods (M3) to (M6), this significant improvement could partially be attributed to the smaller CX count of (M1), compare Figs.~\ref{fig:cx_vs_lambda_line}, \ref{fig:cx_vs_lambda_3reg}, \ref{fig:cx_vs_lambda_compl}.
However, method (M2) uses as many CX gates as method (M1) on all instances and for all depths.
As a consequence, the remarkable improvement in approximation ratio of method (M1) can only be attributed to the noise data incorporation.
(M1) avoids single qubits and CX gates with high error rates as well as the simultaneous use of CX-SQ pairs with large crosstalk.
From this, we conclude that it is highly advantageous to include noise as another criterion besides gate count or depth when transpiling quantum algorithms. 

Furthermore, we observe that (M4) (red lines in Figs.~\ref{fig:qaoa_14q_line_appr_vs_depth}, \ref{fig:qaoa_10q3reg_appr_vs_depth}, \ref{fig:qaoa_5q_compl_appr_vs_depth}) achieves considerably larger approximation ratios than (M3) (green lines) on the line instance and the complete graph instance.
Since the only difference is DD, we conclude that DD is a useful tool to suppress noise.

Analyzing the line instance in more detail, we observe that method (M5) does not choose a line subgraph as initial mapping, see Fig.~\ref{fig:layout_14line}.
As a result, unlike the other methods, (M5) needs to insert swap gates which leads to a larger CX count as Fig.~\ref{fig:cx_vs_lambda_line} shows.
Although method (M5) considers noise, the additional swaps lead to a disadvantage in terms of approximation ratio when compared to the other methods, clearly visible in~Fig.~\ref{fig:qaoa_14q_line_appr_vs_depth}. 
On the contrary, methods (M2), (M3), (M4) and (M6) use as many CX gates as the proposed (M1) approach on the line instance, as seen in Fig.~\ref{fig:cx_vs_lambda_line}.
Here, the difference in approximation ratio can only be attributed to the different ways of noise data incorporation.
Method (M2) does not consider noise at all, while methods (M3) to (M6) consider noise data in the initial layout.
Crosstalk noise, however, is considered in layout and routing only by method (M1).
As a result, (M1) chooses the subgraph with the smallest crosstalk levels, see Fig.~\ref{fig:layout_14line}.
Since (M1) achieves the largest approximation ratio, we conclude that considering crosstalk in the transpilation is highly beneficial.
In particular, we see that the best line in terms of CX gate errors, used by methods (M3) and (M4), fails to match the results of (M1).
This is because the CX gates are benchmarked in isolation and the line with the best product of CX errors includes a high crosstalk triplet between $\text{CX}_{22,25}$ and qubit 24.
Regarding runtime, (M1) and (M2) took roughly 30~s during which the binary linear models where solved to global optimality.
This is somewhat larger but still comparable to the other methods which took between $\sim$ 1~s and 20~s.

On the three-regular instance, methods (M1) and (M2) run into the time limit of 900~s.
\tb{However, methods (M1) and (M2) found the best solution already after 64~s and 119~s, respectively.}
Remarkably, method (M1) is the only method delivering approximation ratios significantly larger than a completely depolarized quantum computer.
Here, (M1) chooses the subgraph with the smallest crosstalk consisting of qubits 12, 13, 14, 15, 16, 18, 19, 20, 22 and 25, see Fig~\ref{fig:layout_10reg}.
Furthermore, when examining the transpiled circuits (not shown), we see that (M1) does not map any circuit two-qubit gate to the hardware $\text{CX}_{12,15}$ gate, since this would trigger large crosstalk on qubit 13, which is also in the chosen subgraph.
Notably, methods (M3) to (M6) use significantly more swaps than (M1) and (M2) resulting in a large CX count in Fig.~\ref{fig:cx_vs_lambda_3reg}.
These results show a clear benefit from the additional time investment in an improved routing solution.
Moreover, compared to parameter training and queue waiting, several minutes of additional routing time are bearable.

Also for the fully connected instance, methods (M1) and (M2) run into the time limit of 900~s.
\tb{However, the IP solver found the best solution already after 90~s and 10~s, respectively.
This indicates that for larger instances we can stop the optimizer early without degrading the solution quality.
Indeed, solvers like Gurobi typically spend most of their time proving that the found solution is optimal.
}
Moreover, (M1) achieves a significantly better approximation ratio than all other methods \tb{up to $p=3$}.
\tb{For $p>3$, the large number of gates causes a depolarization such that any transpilation method returns the uniform distribution.}
\tb{(M1) improves upon the best existing method (M4) by over 20~\% at $p = 1$ where we measure the improvement relative to the interval between the approximation ratio of the uniform distribution and the ideal $p=1$ QAOA which are $0.83$ and $0.98$, respectively.}
Notably, although (M6) uses less swaps than (M1), see Fig.~\ref{fig:cx_vs_lambda_compl}, it returns an approximation ratio not better than random sampling.
This is because (M6) uses the hardware gates $\text{CX}_{22,25}$ and $\text{CX}_{25,24}$ which trigger a large crosstalk on qubit 24 and 23, respectively.
By contrast, (M1) does not use any large crosstalk triplets.
Indeed, (M6) considers single and two qubit gate errors as well as readout errors but ignores crosstalk and thus chooses a subgraph with higher crosstalk levels, see Fig.~\ref{fig:layout_5compl}.
This further stresses the importance of crosstalk incorporation and shows that trading additional swap gates for low crosstalk can be beneficial.
The novel approach (M1) successfully incorporates this trade-off.
\tb{Similarly, (M5) uses as many CX gates as (M1), see Fig.~\ref{fig:cx_vs_lambda_compl}, but yields a significantly worse approximation ratio.
In Fig.~\ref{fig:layout_5compl}, we observe that (M5) chooses a subgraph containing a large crosstalk triplet.
This is because the noise-adaptive layout method from Ref.~\cite{Murali_2019}, which is used by (M5), is insensitive to crosstalk errors.
The scheduling method used by (M5) delays gates which suffer from large crosstalk.
Still, (M5) yields an approximation ratio no better than random sampling.
From this, we conclude that crosstalk can be avoided more effectively via qubit routing than via scheduling.
In particular, if the number of large crosstalk triplets is relatively small as is the case in our study,
crosstalk can be avoided often without inserting additional swaps.}
\begin{figure}[p]
	\makebox[\textwidth][c]{
    \subfloat[]{\includegraphics[width=0.5\linewidth,trim={0cm 0cm 1cm 1cm},clip]{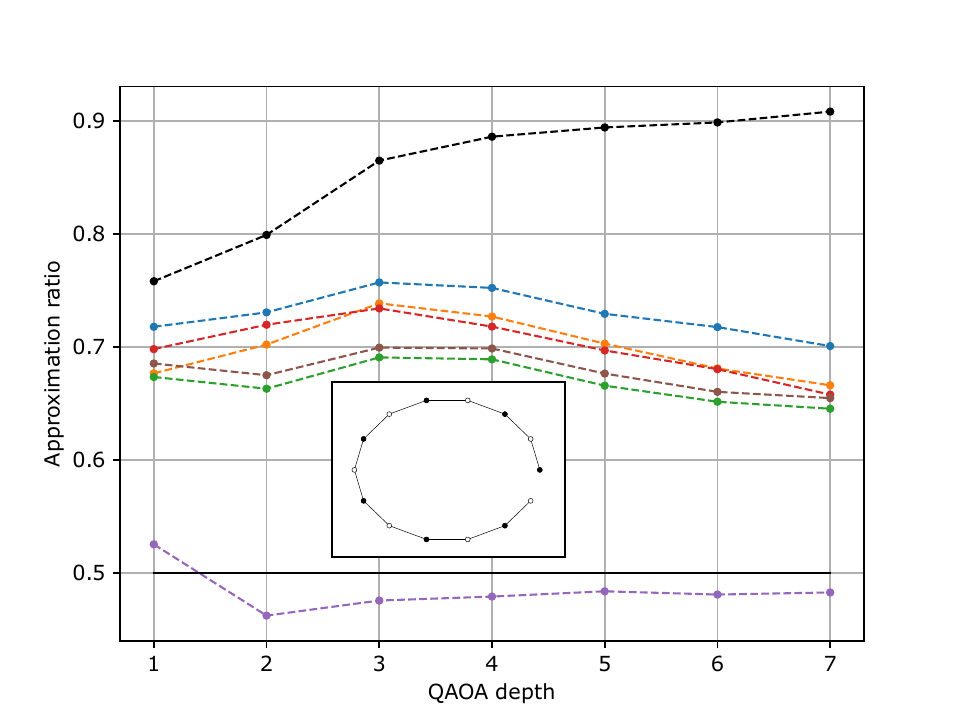}\label{fig:qaoa_14q_line_appr_vs_depth}}
    \subfloat[]{\includegraphics[width=0.3\linewidth,trim={3cm 0cm 3cm 3cm},clip]{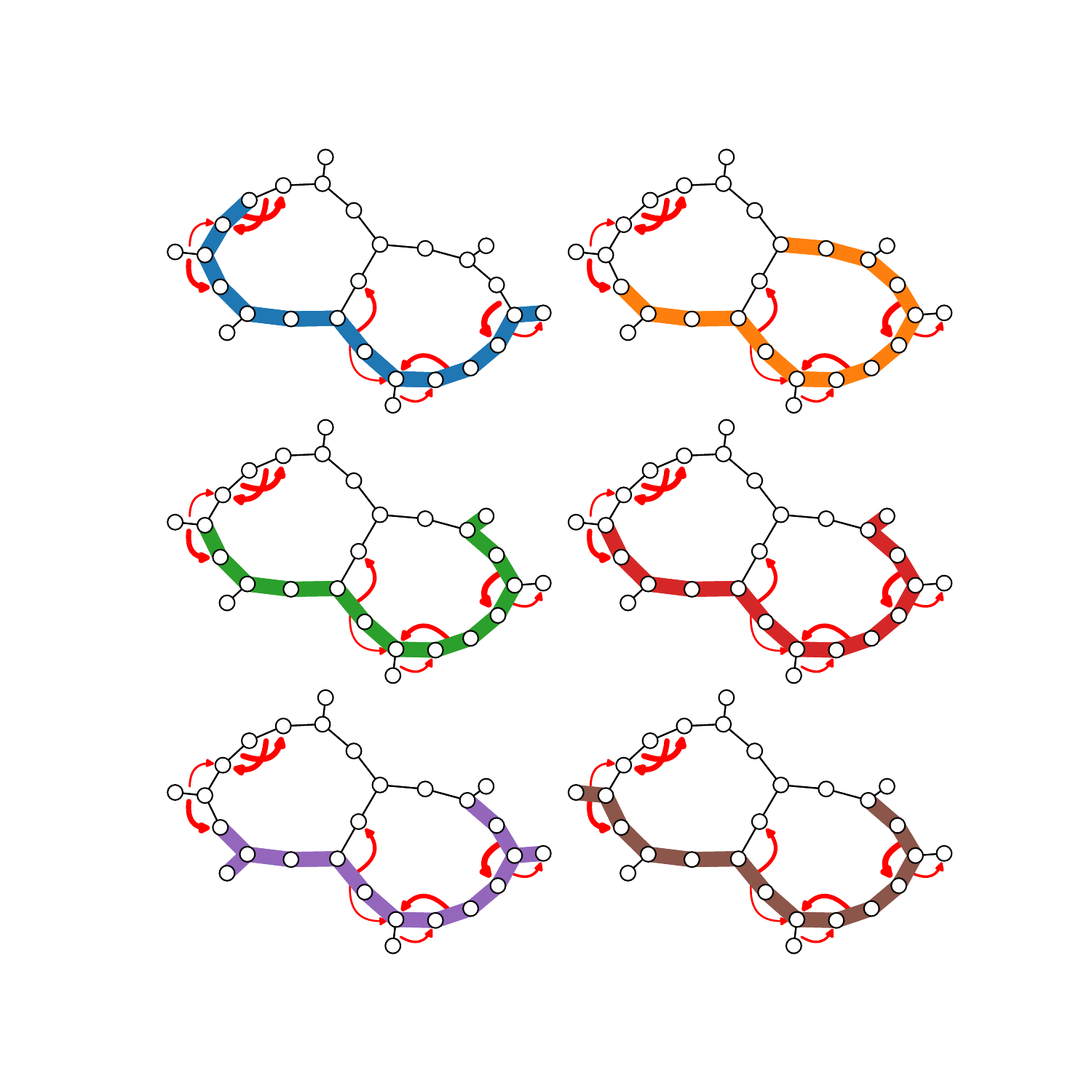}\label{fig:layout_14line}}
    \subfloat[]{\includegraphics[width=0.35\linewidth]{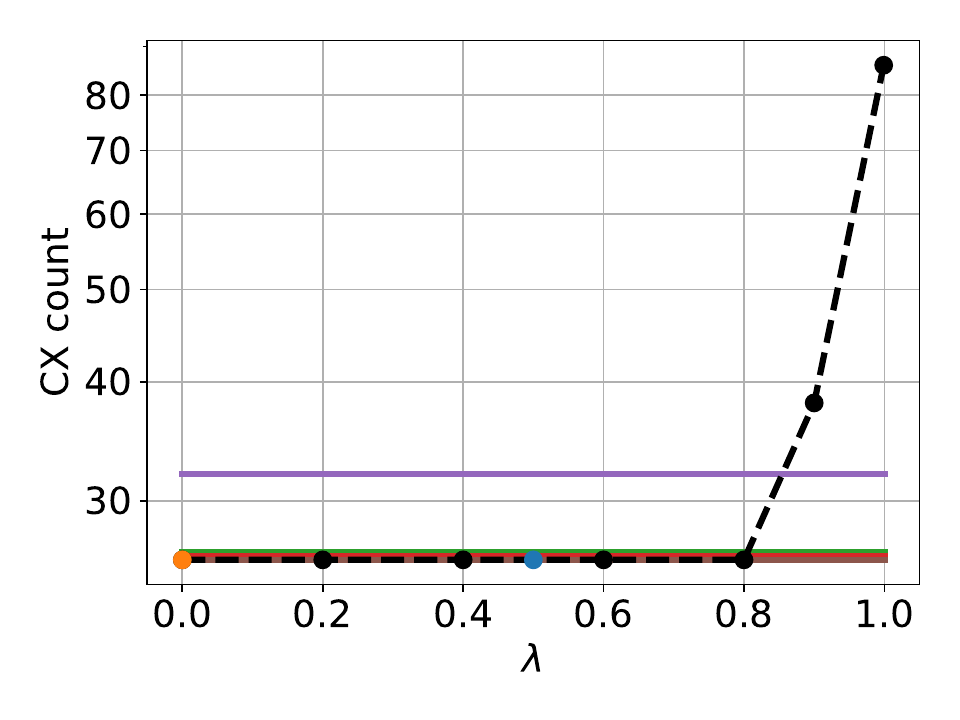}\label{fig:cx_vs_lambda_line}}
    }\\
    \makebox[\textwidth][c]{
    \subfloat[]{\includegraphics[width=0.5\linewidth,trim={0cm 0cm 1cm 1cm},clip]{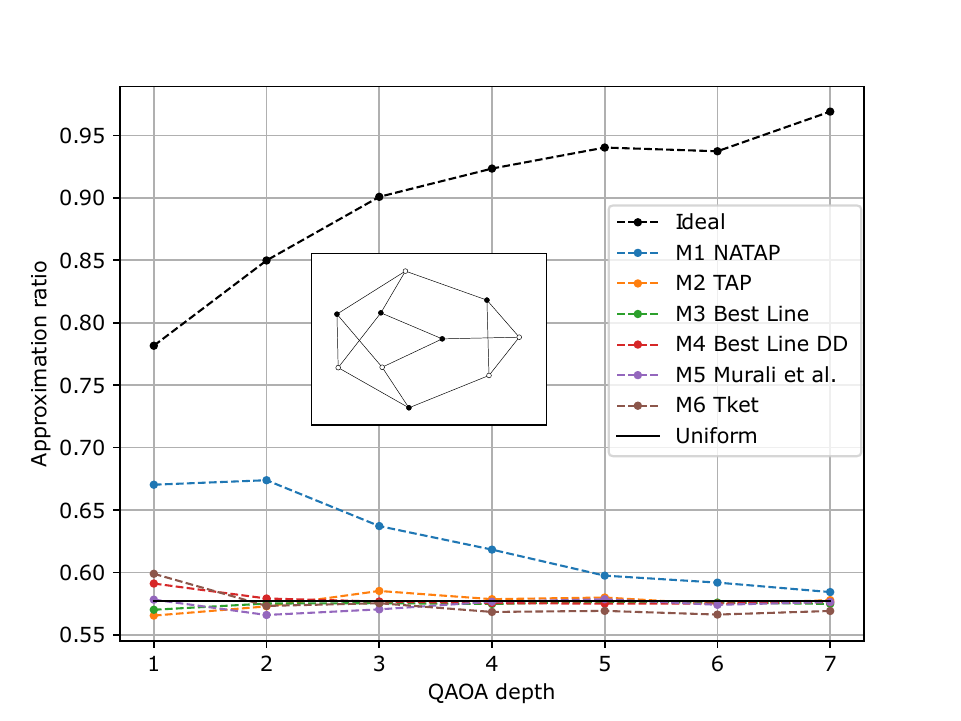}\label{fig:qaoa_10q3reg_appr_vs_depth}}
    \subfloat[]{\includegraphics[width=0.3\linewidth,trim={3cm 0cm 3cm 3cm},clip]{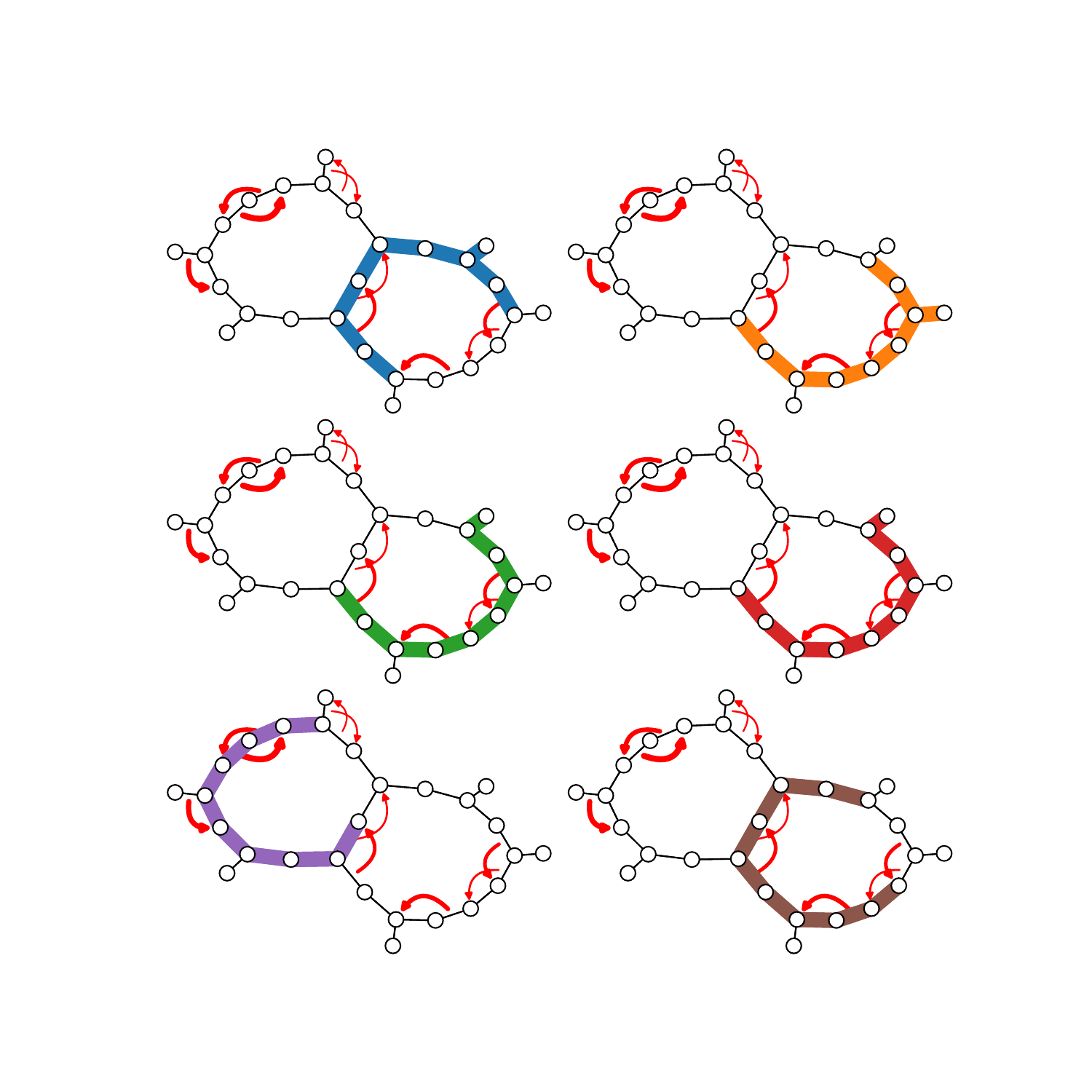}\label{fig:layout_10reg}}
    \subfloat[]{\includegraphics[width=0.35\linewidth]{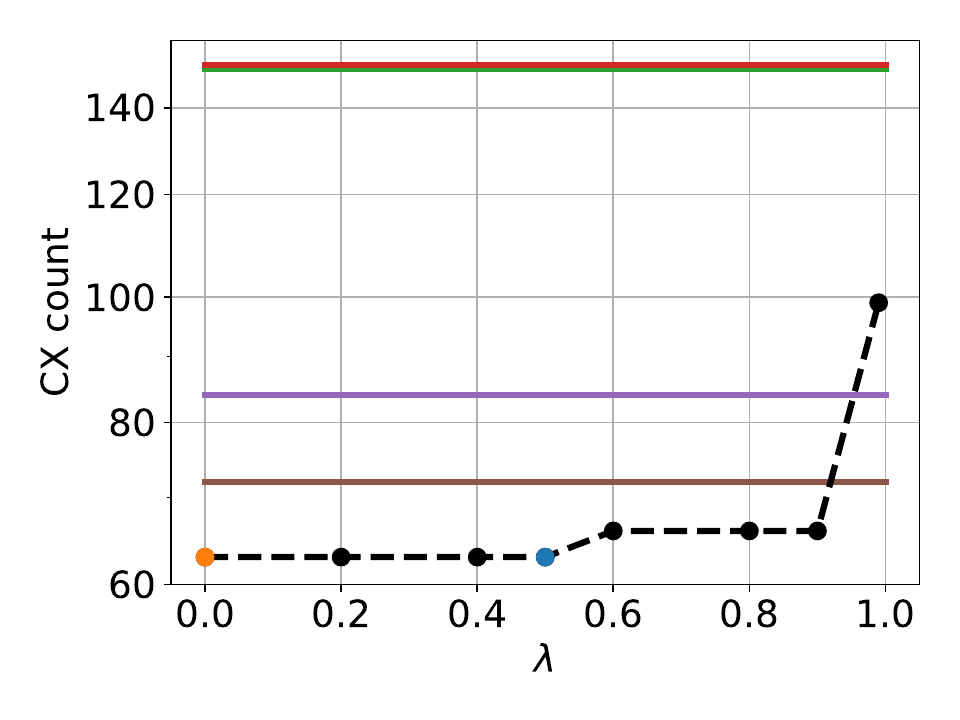}\label{fig:cx_vs_lambda_3reg}}
	}
     \makebox[\textwidth][c]{
    \subfloat[]{\includegraphics[width=0.5\linewidth,trim={0cm 0cm 1cm 1cm},clip]{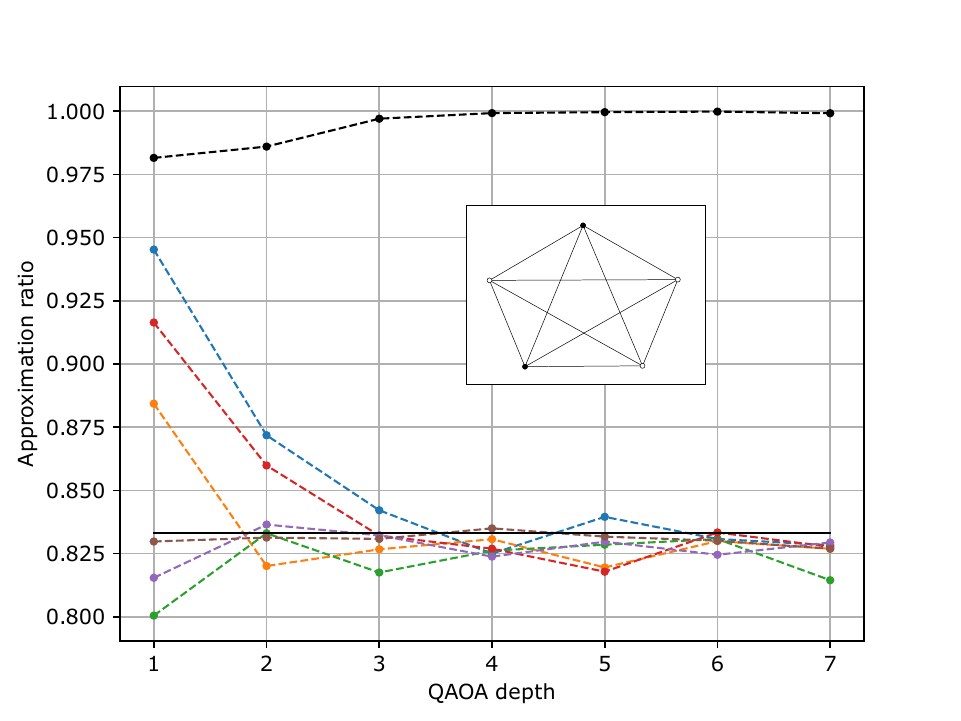}\label{fig:qaoa_5q_compl_appr_vs_depth}}
    \subfloat[]{\includegraphics[width=0.3\linewidth,trim={3cm 0cm 3cm 3cm},clip]{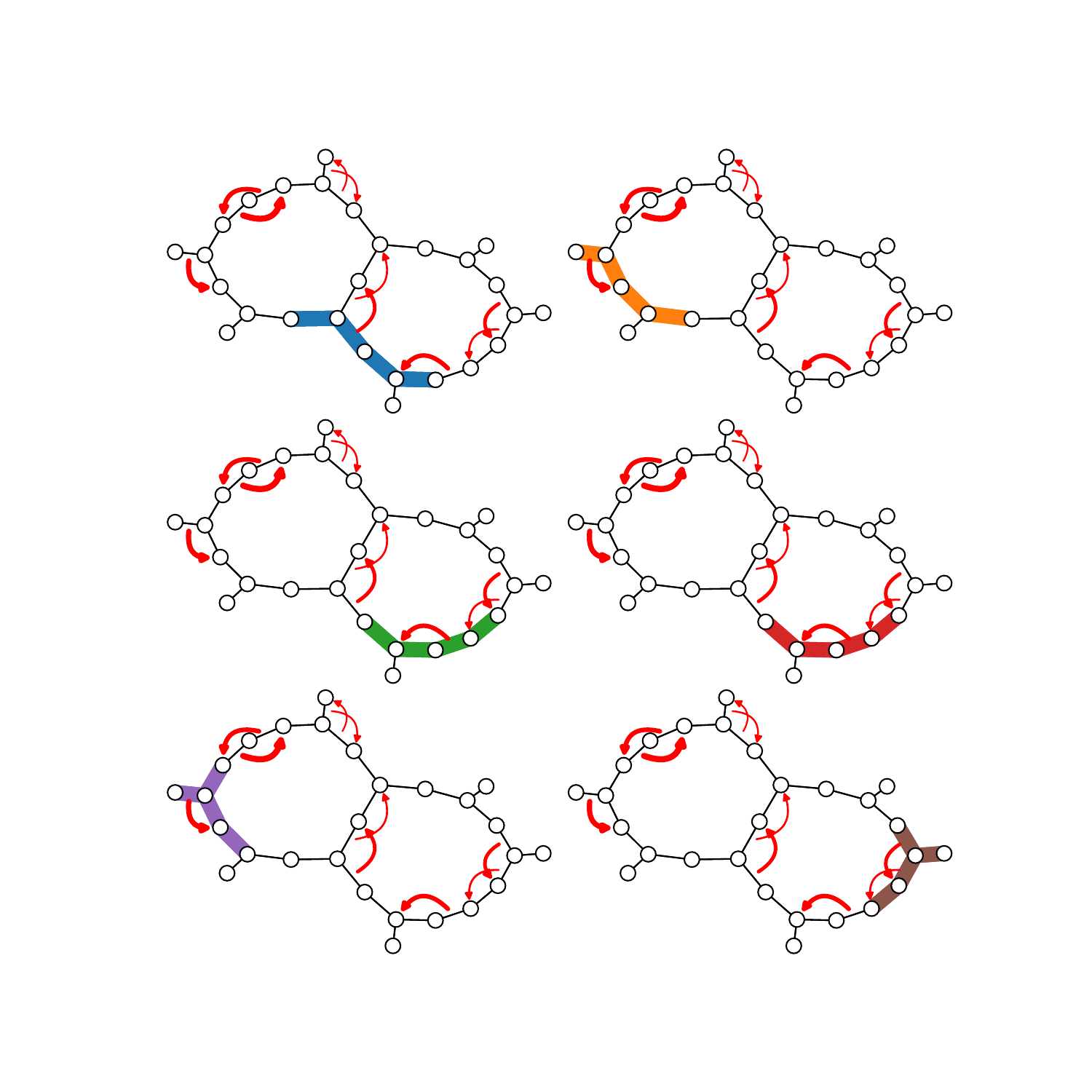}\label{fig:layout_5compl}}
    \subfloat[]{\includegraphics[width=0.35\linewidth]{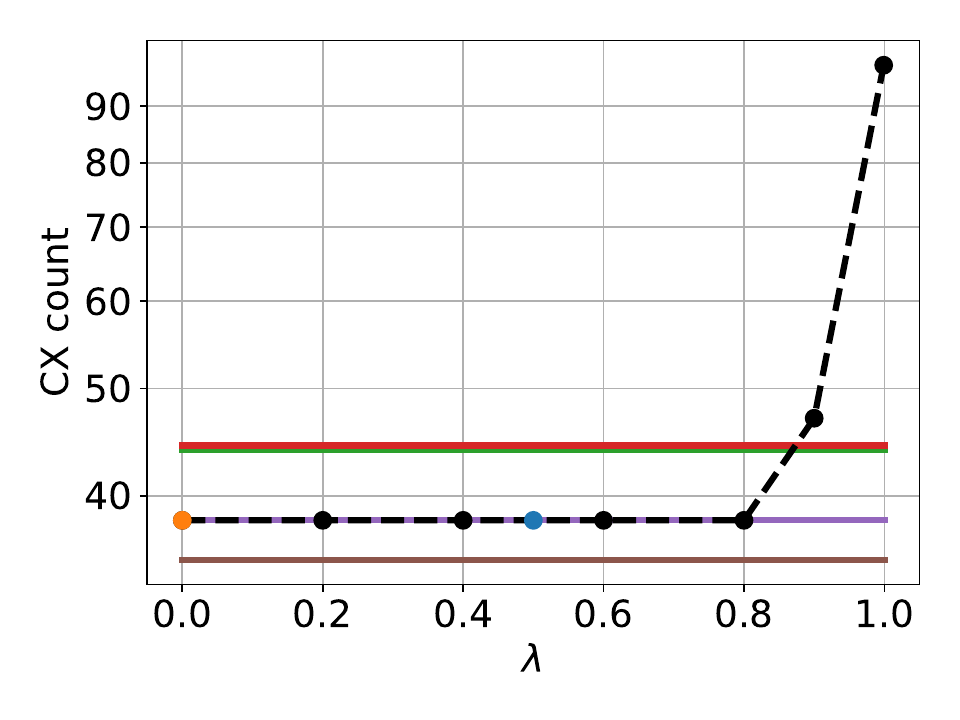}\label{fig:cx_vs_lambda_compl}}
	}
    \caption{Results for benchmarking different noise aware transpilation methods.
    (a), (d), (g) show the achieved approximation ratio versus the QAOA depth for the line, regular and fully connected instances (shown in inlays, where filled vertices mark an optimum cut). (b), (e), (h) highlight the subgraphs used for computation for each method. Additionally, high crosstalk is marked by red arrows, where the thickness is proportional to the crosstalk magnitude. Crosstalk magnitudes differ between instances since they were measured on different days. (c), (f), (i) show the total CX count in the transpiled circuits for QAOA depth $p=1$ versus the weighting factor $\lambda$.}
	\label{fig:qaoa_res}
\end{figure}

\tb{The lower CX count and noise level of method (M1) come at the cost of an increased transpilation time compared to the other methods.
We investigate this trade-off between transpilation time and performance gain by comparing our IP-based routing algorithm, used by method (M1), to the SABRE and Tket routing heuristics used by methods (M3)-(M5) and (M6), respectively.
To this end, we route QAOA circuits corresponding to MaxCut instances on three-regular graphs of increasing size to the coupling map of \emph{ibmq\_ehningen} and compare the number of inserted swaps and the resulting circuit depth.
We again allow Gurobi a maximum runtime of 900~s.
Our IP based algorithm inserts less swaps than SABRE and Tket on all instances, with an average reduction of 56~\% and 41~\%, respectively, see Fig.~\ref{fig:qaoa_res}(a).
The reduction in circuit depth, shown in Fig.~\ref{fig:depth_vs_size}, is even more significant with average reductions of 51~\% and 49~\% compared to SABRE and Tket, respectively.
Moreover, we observe in Fig.~\ref{fig:swaps_vs_size} that, for the smaller instances with $n\leq 14$ vertices, the best solution was already found within the first third of the total runtime.
For $n>14$ the best solution was found at the end of the allocated runtime.
Crucially, this probably sub-optimal solution has a lower depth and gate count than the other methods.
}
\begin{figure}
	\makebox[\textwidth][c]{
    \subfloat[]{\includegraphics[height=4.5 cm]{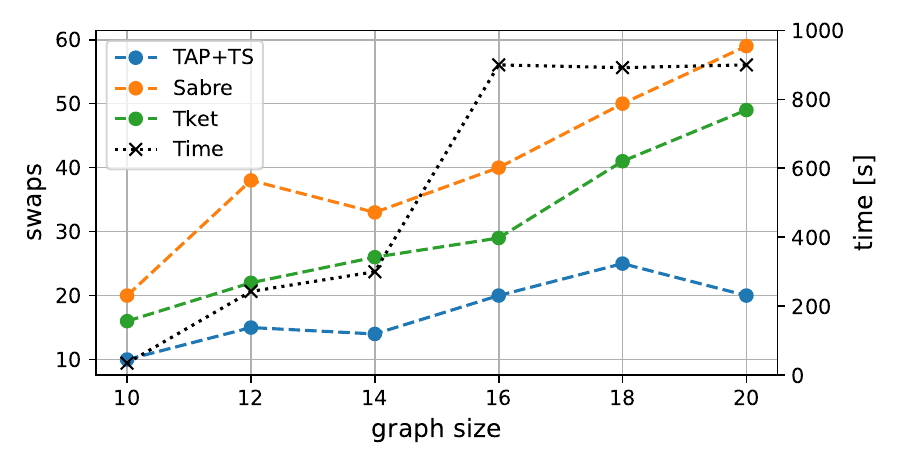}\label{fig:swaps_vs_size}}
    \subfloat[]{\includegraphics[height=4.5 cm]{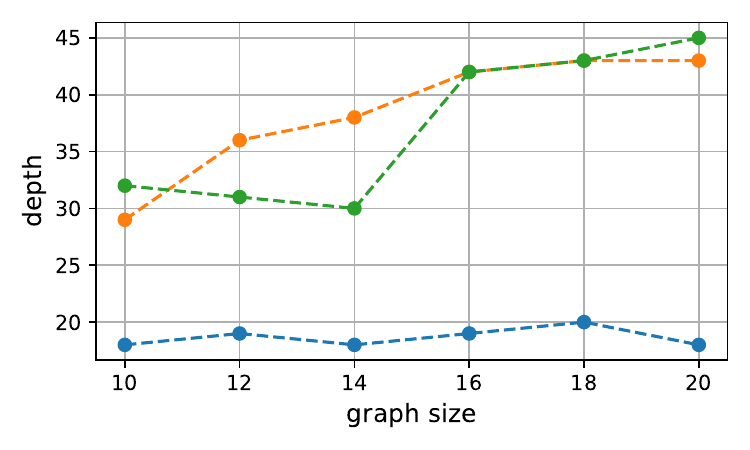}\label{fig:depth_vs_size}}
    }
    \caption{\tb{Comparison of our IP-based routing algorithm TAP+TS to the routing heuristics SABRE~\cite{Li2019} and Tket~\cite{Sivarajah_2020}.
    We route QAOA circuits corresponding to MaxCut instances on three-regular graphs of increasing size to the coupling map of \emph{ibmq\_ehningen}, shown in Fig.\ref{fig:cm_ehningen}.
    We allow a total runtime of 900~s.
    We compare the number of inserted swap gates in (a) and the resulting circuit depth in (b) which is the length of the critical path in the circuit including single- and two-qubit gates.
    On the second y-axis in (a), we report the time taken by the IP solver to find the returned solution.}}
	\label{fig:runtime}
\end{figure}

To summarize, noise-aware routing as performed by the proposed method (M1) improves quantum computation significantly by accounting for crosstalk errors in the transpilation process.
\tb{This improvement comes at the cost of increased transpilation time compared to other heuristics but our results show that the additional time investment considerably reduces noise.}
We conclude that crosstalk errors are crucial to consider since applications like QAOA drive multiple qubits simultaneously.
This further motivates the use of metrics such as layer fidelity for benchmarking quantum computers at scale~\cite{Mckay2023}.
Moreover, our (M1) results show that crosstalk can be mitigated without additional swap gates or delays if the routing is done with the method we propose.

\section{Conclusion}\label{sec:concl}
In this work, we first developed a simplified randomized benchmarking experiment to quantify crosstalk noise induced by two-qubit gates.
The simplified experiment does not rely on measurements of two-qubit gate error rates which significantly reduces the number of circuits to execute.
Furthermore, this allows us to replace random two-qubit gate sequences by a single stretched CX-pulse, simplifying compilation.
\tb{
This method is applicable to other architectures in which the multi-qubit gate is created by driving a corresponding control Hamiltonian with a pulse.
Indeed, one may simply drive this control for an extended time while RB is performed on the surrounding qubit(s).
Future work could therefore study the applicability of this method to other hardware platforms such as trapped ions and neutral atoms.}
\tb{Furthermore, our} experiments show that this method can replace standard experiments without degrading accuracy in cross-resonance based hardware.
\tb{Comparing our simplified protocol to other crosstalk characterization methods is a direction of future research.}
\tb{For example, randomized compiling was recently extended to measure gate-triggered crosstalk noise~\cite{Perrin_2023}.}
Moreover, developing crosstalk measurement methods which do not rely on random gates could simplify crosstalk measurement even further.

Our second contribution is an optimal experiment scheduling to minimize the overhead required for crosstalk characterization.
We model this task as a graph coloring problem and solve it to optimality for several relevant example architectures.
Our study reveals that heuristics leave room for improvement and that the overhead heavily increases with the density of the underlying graph.
\tb{Recently, Ref.~\cite{kattemölle2024edge} showed that an edge coloring of a sufficiently large subgraph of an infinite lattice
induces a proper coloring of the entire lattice.
Transferring this result to our vertex coloring problem is a promising direction of future research
since the graphs which we need to color are typically large subgraphs of infinite lattices.}
Furthermore, a detailed analysis of the time dependency of crosstalk errors may help further reduce the characterization overhead.
The measurements performed in the course of this work indicate that severe crosstalk is mostly limited to a fixed subset of qubits.
Thus, it may be sufficient to only characterize this subset.
Similarly, if crosstalk magnitude is relatively stable over time, the measurement frequency can be reduced.

The third contribution is a noise-aware routing method incorporating crosstalk data.
The routing algorithm builds upon previous work based on integer programming.
We improve the solution time of the underlying integer linear model by deriving a tighter linearization of quadratic constraints.
In this context, we derive a complete linear description of an associated Boolean Quadric Polytope on bipartite graphs with additional choice constraints.
This theoretical result has applications beyond this work.
Future work on polyhedral analysis can reduce runtime even further.
\tb{Crucially, we see that Gurobi rapidly finds high-quality solutions and spends most of its allocated time proving optimality.
Indeed, finding provably optimal circuits is not necessary when good-enough circuits suffice.
}
Noise data is included in the model via additional terms in the objective function.
To the best of our knowledge, the proposed method is the first to consider both standard noise data and crosstalk data.
We benchmark our method against five other routing algorithms with QAOA circuits which we execute on hardware.
These experiments reveal that \tb{our} crosstalk-aware routing significantly improves the measured results \tb{compared to other noise-aware} transpiler methods.
Interestingly, we observed that it can even be advantageous to trade additional swap gates for low noise.
Recently, Ref.~\cite{Sharma_2023} proposes a noise-aware variant of the token swapping approximation algorithm which is a subroutine in our routing method.
It covers two-qubit gate errors but is insensitive to crosstalk errors.
In future research, developing a crosstalk-aware token swapping algorithm will help the proposed routing method to further mitigate noise.
\tb{Another direction of future research is the incorporation of holistic performance metrics, such as layer fidelity or cycle benchmarking, in our routing method~\cite{Mckay2023,Erhard_2019}}

In summary, efficiently characterizing and mitigating noise is crucial to faithfully run circuits on noisy quantum devices.
Our work significantly improves noise mitigation in qubit routing compared to existing methods.
This is achieved by additionally mitigating crosstalk errors which are highly relevant for applications and in particular sampling-based applications.

\appendix
\section{Randomized Benchmarking}\label{sec:app_rb}
Randomized benchmarking is a protocol to determine average gate error rates.
In its simplest version, a random sequence of Clifford gates is applied to a set of $n$ qubits initialized in the zero-state.
A final gate is chosen such that it inverts the random sequence.
Then, a theoretical error model predicts that the probability of finding the qubits in the ground state will approximately show an exponential decay of the form
\begin{align}
    P(0) = A \cdot \alpha^m + B
\end{align}
where $m$ is the length of the random gate sequence.
The constants $A$ and $B$ absorb state preparation and readout errors as well as the error of the final gate.
The decay rate $\alpha$ relates to the average error per Clifford gate (EPC) via
\begin{align}
    EPC = \left( 1-\frac{1}{d} \right)\cdot (1-\alpha)
\end{align}
where $d=2^n$ is the dimension of the Hilbert space of $n$ qubits.

\section{Additional SRB Results\label{sec:app_xt}}
Here, we give results for additional SRB experiments.
First, to support our claim that CX-SQ crosstalk is the more relevant than SQ-CX crosstalk, we characterize SQ-CX crosstalk for the complete \emph{ibmq\_ehningen} chip via SRB, data shown in Fig.~\ref{fig:hist_srb_sqcx}.
In order to reduce the number of measurements, the influence of all neighboring qubits on a given CX is measured simultaneously by performing SRB on the CX and all of its neighbors.
This simplification can only increases the observed ERR compared to a full SQ-CX crosstalk measurement.
Analogously, we characterize SQ-SQ crosstalk for the complete chip via SRB, see Fig.~\ref{fig:hist_srb_sqsq}.
In summary, we observe error rate ratios of at most $3.1$ and $5.5$ for SQ-SQ and SQ-CX, respectively.
Thus, SQ-SQ and SQ-CX crosstalk are an order of magnitude smaller than CX-SQ.
Finally, we characterize CX-CX crosstalk via SRB.
Results are shown in Fig.~\ref{fig:hist_srb_cxcx}.
Here, we observe that if large crosstalk is measured for a particular CX-CX pair, there is also a corresponding CX-SQ measurement showing crosstalk.
From this, we conclude that CX-CX can be attributed to CX-SQ crosstalk.
\begin{figure}
    \centering
 	\subfloat{\includegraphics[width=1\textwidth]{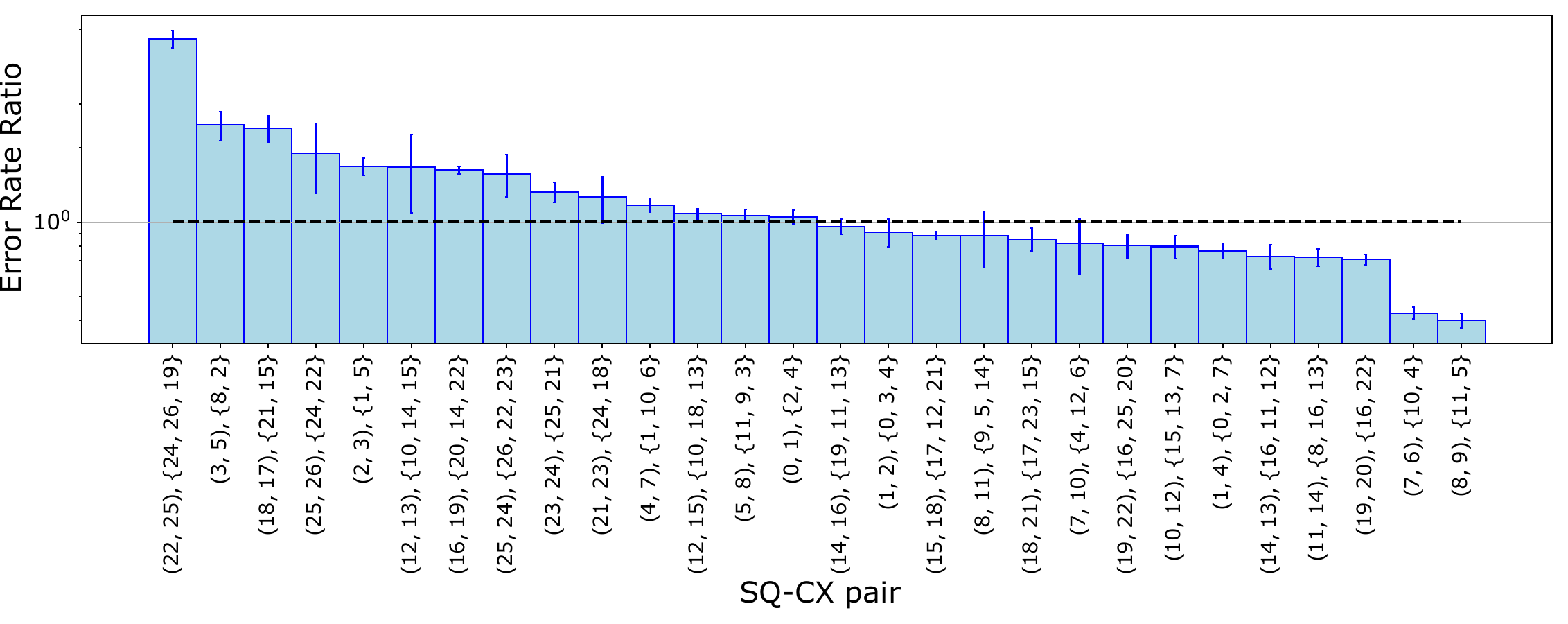}}\\
	\caption{SRB measurements of SQ-CX crosstalk.}
	\label{fig:hist_srb_sqcx}
\end{figure}
\begin{figure}
    \centering
 	\subfloat{\includegraphics[width=1\textwidth]{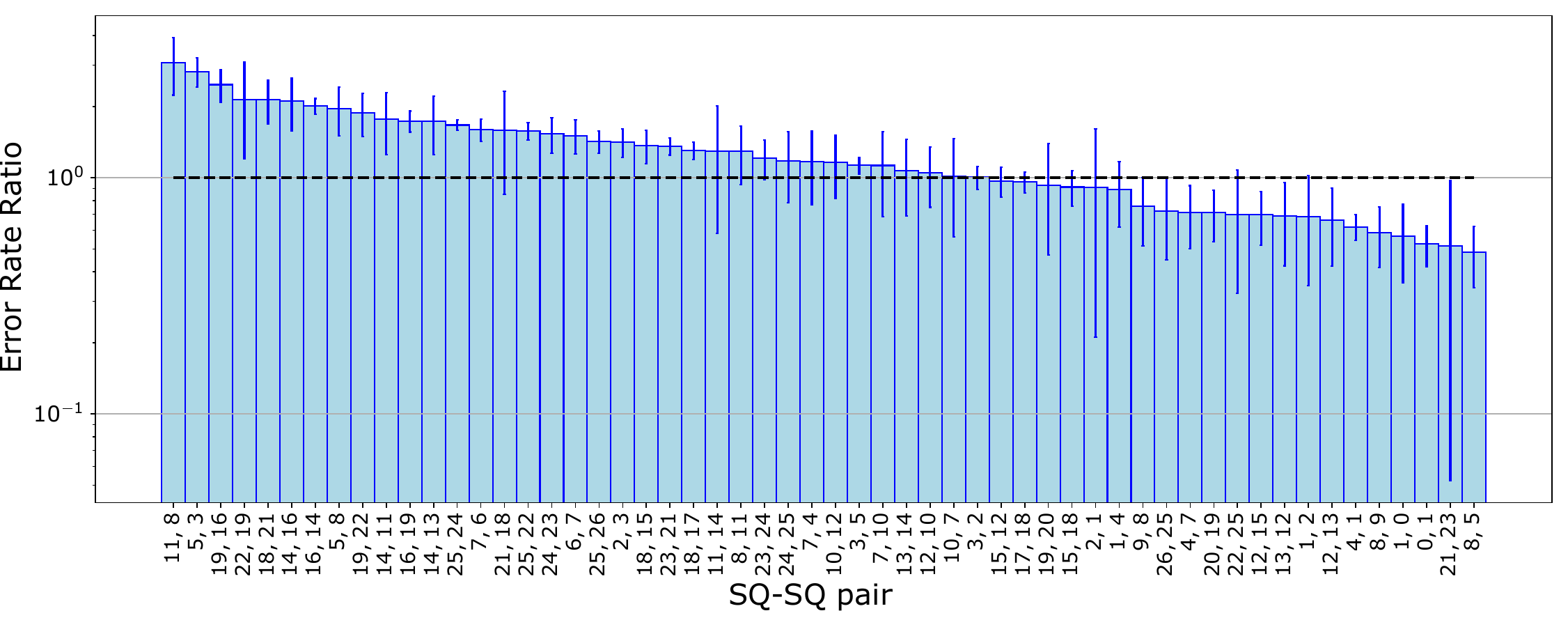}}\\
	\caption{SRB measurements of SQ-SQ crosstalk.}
	\label{fig:hist_srb_sqsq}
\end{figure}
\begin{figure}
    \centering
 	\subfloat{\includegraphics[width=1\textwidth]{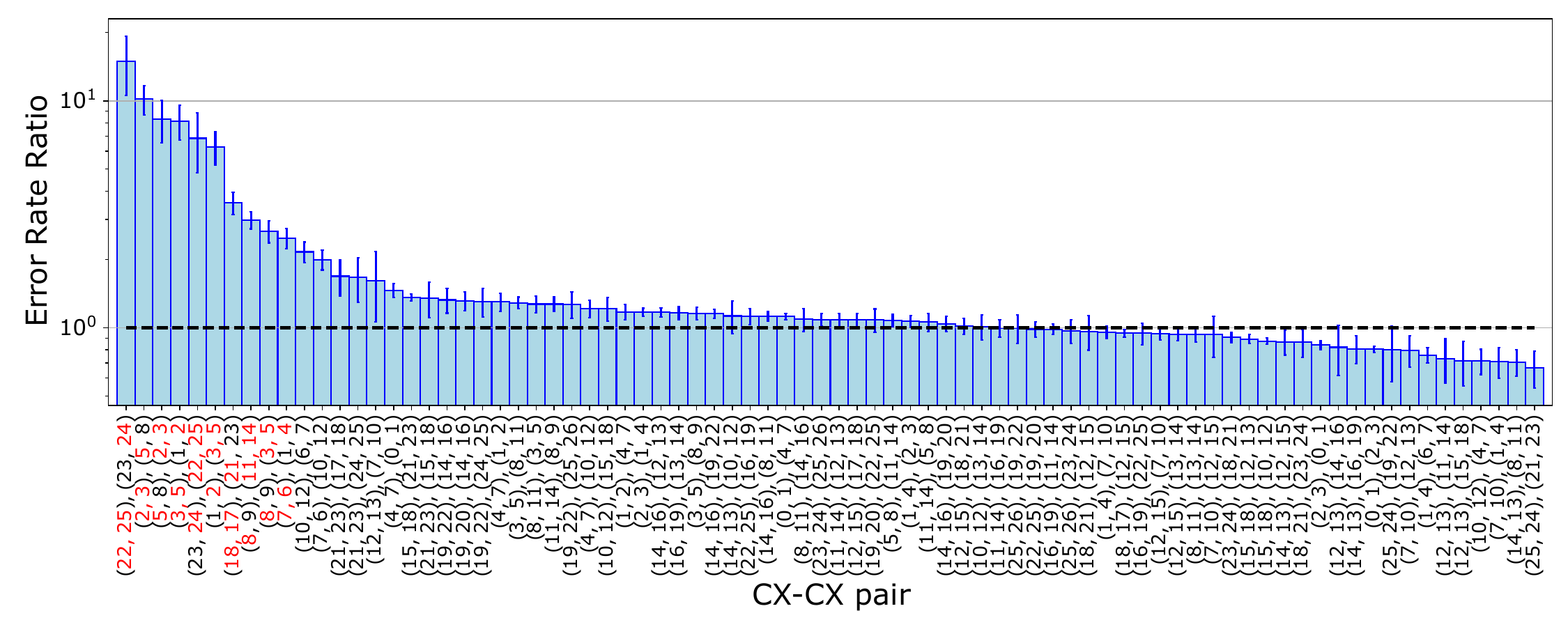}}\\
	\caption{SRB measurements of CX-CX crosstalk. For the ten largest ERRs, there is a corresponding CX-SQ pair, marked in red, showing also large ERR, compare Fig.~\ref{fig:hist_cxrb}.}
	\label{fig:hist_srb_cxcx}
\end{figure}

\newpage

\section{\tb{Additional CXRB Results}}\label{sec:app_kyoto}
Here, we show additional data for a complete characterization of CX-SQ crosstalk in the 127 qubit device \emph{ibm\_kyoto}.
Using the optimal experiment schedule from Section~\ref{sec:coloring}, we characterize all 394 CX-SQ pairs using only six consecutive batches of CXRB circuits.
The measured ERR is close to one for most CX-SQ pairs, see Fig.~\ref{fig:cxrb_kyoto}.
We observe an ERR larger than one for only 30 pairs at a statistical significance level of 95~\%.
Additionally, we perform crosstalk characterization via standard SRB.
We visualize the correlation between the ERRs measured via SRB and CXRB in Fig.~\ref{fig:cxrb_srb_cor_kyoto}.
The Pearson correlation coefficient computes to $0.32$ at a p-value of $3.0\cdot 10^{-6}$.

\begin{figure}   
    \subfloat[ERRs for all 394 CX-SQ pairs. For clarity, pair labels are not shown.]{\includegraphics[width=1\textwidth]{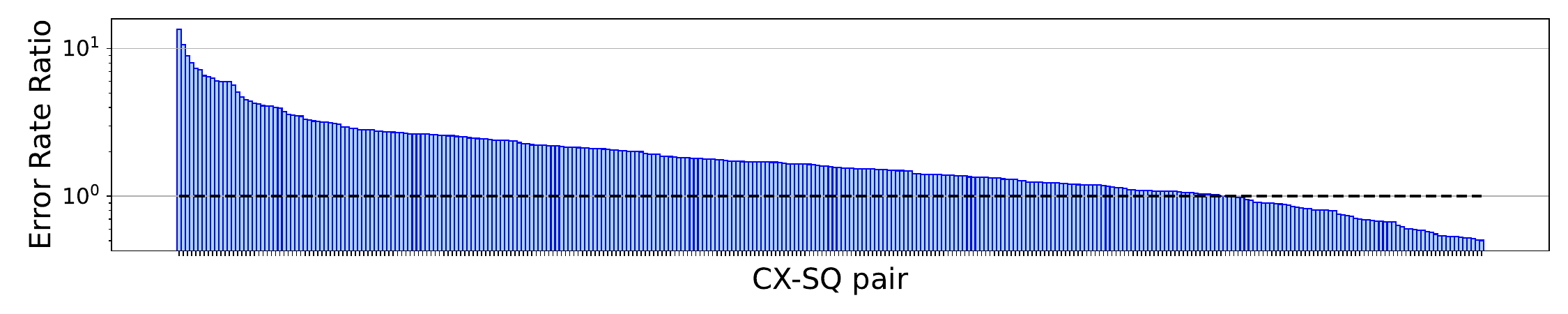}\label{fig:hist_cxrb_kyoto_full}} \\
    \subfloat[60 largest and 10 smallest ERRs.]{\includegraphics[width=1\textwidth]{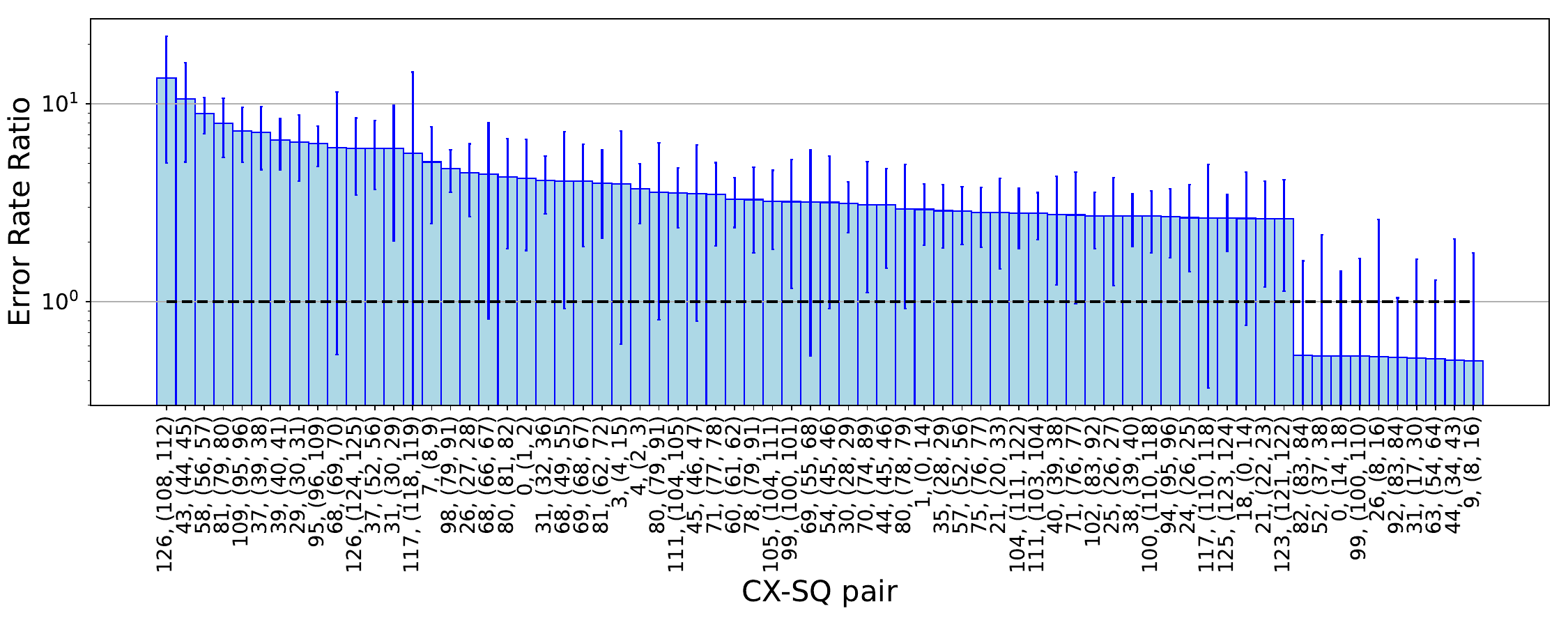}\label{fig:hist_cxrb_kyoto}}
	\caption{\tb{CX-SQ magnitude for the complete chip of \emph{ibm\_kyoto}. In (a), we visualize the ratio between the error rate with and without applied CR pulse (ERR) for all 394 CX-SQ pairs in descending order. In (b), we give the 60 largest and 10 smallest ERRs. Error bars are obtained by performing an error propagation from the errors in the exponential decay fits.
 Large error bars are likely due to the smaller sample size of 10 compared to 12 in the \emph{imbq\_ehningen} experiments of Fig.~\ref{fig:hist_cxrb}.
 The dashed line corresponds to $\text{ERR}=1$, i.e., no crosstalk.}
 }
	\label{fig:cxrb_kyoto}    
\end{figure}

\begin{figure}
	\centering
    \subfloat{
    \includegraphics[width=0.5\linewidth]{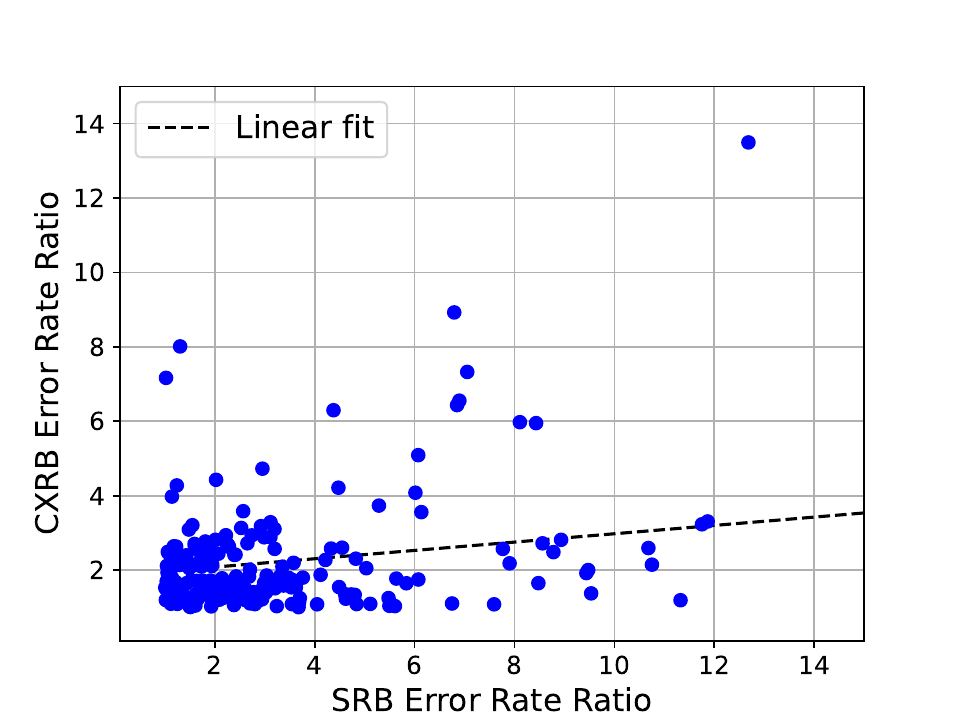}}
	\caption{
    Correlation between the error rate ratio measured via SRB and CXRB for \emph{ibm\_kyoto}.
    The Pearson correlation coefficient computes to $0.32$ at a p-value of $3.0\cdot 10^{-06}$.
    }
	\label{fig:cxrb_srb_cor_kyoto}
\end{figure}

\section{Ramsey Experiment}\label{app:ramsey}
To deepen our understanding of the CX-SQ crosstalk we perform a modified Ramsey experiment.
Ramsey experiments are intended to measure coherence time and qubit frequency.
First, a $\sqrt{X}$ pulse maps the qubit on the equator of the Bloch sphere.
After a variable delay time, another $\sqrt{X}$ pulse is applied before measuring the qubit in the computational basis.
If the true frequency of the qubit differs from the frequency of the applied frame, we will observe a time-dependent oscillation as the qubit precesses with respect to the frame.
As a result, an oscillation is observed in the qubit population.
Moreover, phase information is lost due to decoherence, leading to an exponential damping of the oscillation.
Altogether, one observes a damped oscillation in the qubit population, where the oscillation frequency equals the difference between qubit frequency and frame frequency
whereas the damping constant connects to the coherence time.

To characterize CX-SQ crosstalk between CX $(i,j)$ and qubit $k$
by Ramsey experiments,
we first conduct a standard Ramsey experiment on qubit $k$.
Afterwards, the experiment is repeated with a simultaneously applied, stretched CX pulse on qubits $(i,j)$, analogous to the CXRB experiment.
This experiment allows us to investigate whether crosstalk is caused by a shift in qubit frequency.

Exemplary results of Ramsey experiments showing such a frequency shift are shown in Fig.~\ref{fig:ramsey_ex}.
Analogously to RB, we compute the ratio between the oscillation frequency measured with a stretched CX-pulse and the oscillation frequency measured in isolation.
This ratio represents how much the difference between qubit frequency and frame frequency increases when a CX pulse is applied.
We conduct modified Ramsey experiments to characterize frequency shifts for the complete \emph{ibmq\_ehningen} chip.
Results are visualized in Fig~\ref{fig:hist_ramsey}.
We observe oscillation frequencies in the order of $10$ to $100~\mathrm{kHz}$.
The largest increase in oscillation frequency we measure is 25 when a stretched CX pulse is applied.
In Fig.~\ref{fig:ramsey_srb_cor} we compare the frequency ratio to the ERR measured via CXRB.
In general, we do not observe a significant correlation between frequency ratio and ERR.
However, for some pairs showing a large ERR we also detect a large frequency shift.
From these data we conclude that only some of the gate-induced crosstalk is due to frequency shifts of the qubits.

\begin{figure}
    \centering
 	\subfloat{\includegraphics[width=0.5\textwidth]{./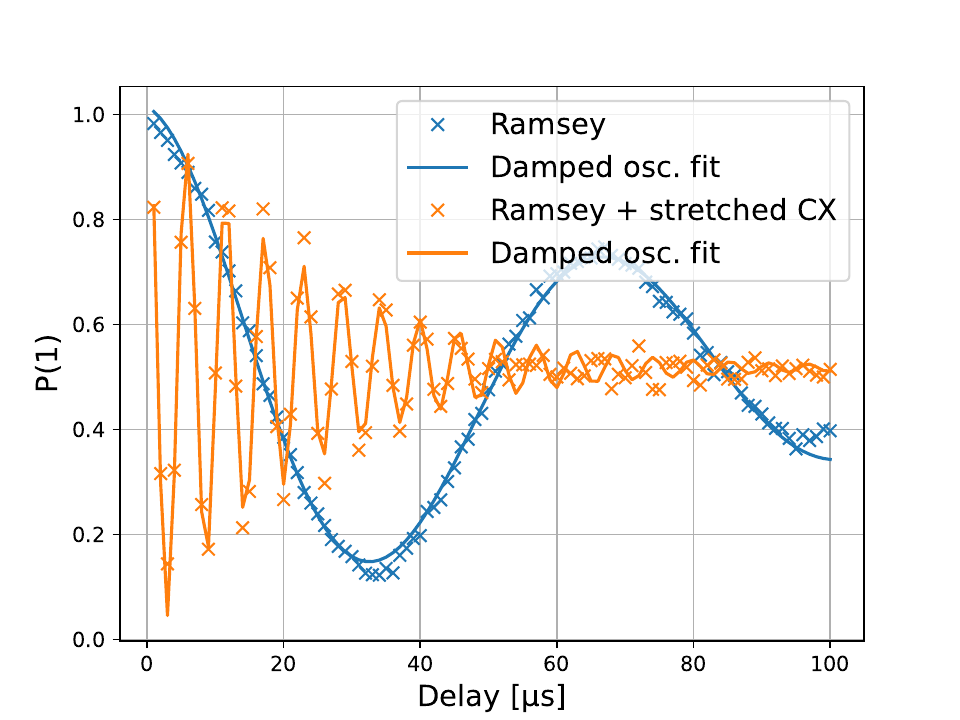}}\\
	\caption{Exemplary results for Ramsey experiment.
        The blue points and curve show the data when the Ramsey experiment is performed on qubit 24 in isolation.
        The orange points and curve show the data when a stretched cross-resonance pulse is applied on the neighboring qubit pair $(22,25)$.
        The oscillation frequency shifts from $14.6$~kHz to $176$~kHz.}
	\label{fig:ramsey_ex}
\end{figure}
\begin{figure}
    \centering
 	\subfloat{\includegraphics[width=1\textwidth]{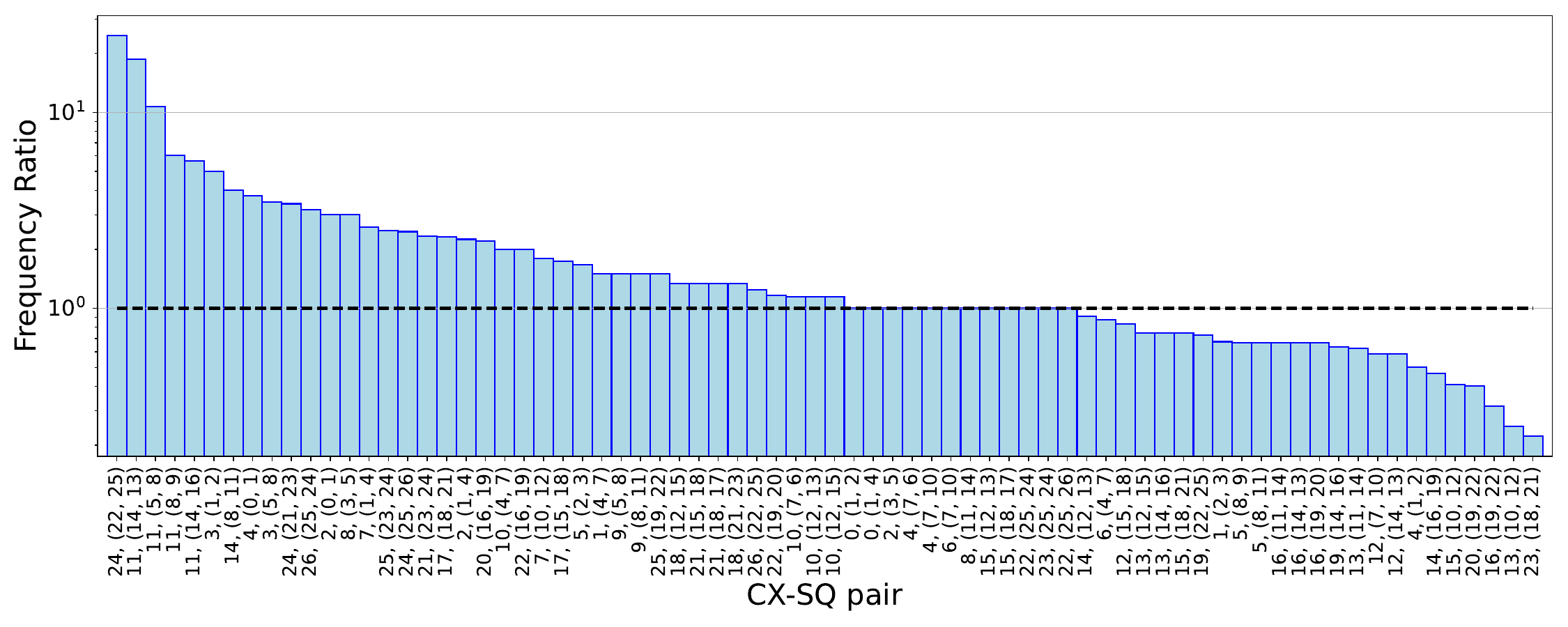}}\\
	\caption{Ramsey measurements of CX-SQ crosstalk.}
	\label{fig:hist_ramsey}
\end{figure}
\begin{figure}
	\centering
    \includegraphics[width=0.4\linewidth]{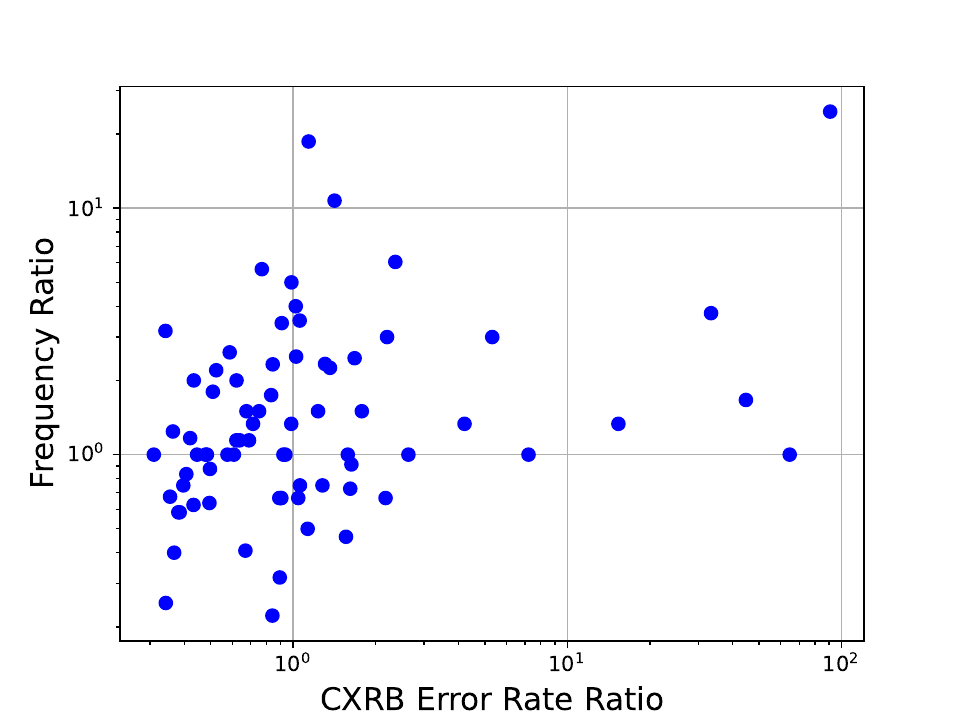}
	\caption{Correlation between the increase in error rate, measured via RB, and the frequency shift, measured via Ramsey experiments.}
	\label{fig:ramsey_srb_cor}
\end{figure}

\paragraph{Acknowledgements.}
This research is supported by the Bavarian Ministry of Economic Affairs, Regional Development and Energy with funds from the Hightech Agenda Bayern.
This research is part of the Munich Quantum Valley, which is supported by the
Bavarian state government with funds from the Hightech Agenda Bayern Plus.
	\newpage
	\bibliographystyle{unsrturl}  
	\bibliography{./bib_ct}
	
\end{document}